\newtheorem{theorem}{Theorem}[section]
\newtheorem{corollary}[theorem]{Corollary}
\newtheorem{lemma}[theorem]{Lemma}
\newcommand{\del}{\backslash}
\title
{Graphical representations of graphic frame matroids}
\author{
   Rong Chen\thanks{Fuzhou University, rongchen@fzu.edu.cn. Supported by CNNSF (No.11201076), SRFDP (No.20113514120010), JA11032 and CSC.}
\and
   Matt DeVos\thanks{Simon Fraser University, mdevos@sfu.ca.
     Supported in part by an NSERC Discovery Grant (Canada)}
\and
   Daryl Funk\thanks{Simon Fraser University, dfunk@sfu.ca. Supported in part by an NSERC Postgraduate Scholarship.}
\and
   Irene Pivotto\thanks{University of Western Australia, irene.pivotto@uwa.edu.au. Supported by an Australian Research Council Discovery Project (project number DP110101596).}
}
\begin{document}

\maketitle

\begin{abstract}
A frame matroid $M$ is \emph{graphic} if there is a graph $G$ with
cycle matroid isomorphic to $M$. In general, if there is one such
graph, there will be many. Zaslavsky has shown that frame matroids are precisely those having a
representation as a biased graph; this class includes graphic
matroids, bicircular matroids, and Dowling geometries. Whitney characterized which graphs have isomorphic cycle
matroids, and Matthews characterized which graphs have isomorphic graphic bicircular matroids. In this paper, we give
a characterization of which biased graphs give rise to isomorphic graphic frame matroids.
\end{abstract}

\section{Introduction}
A \emph{biased graph} $\Omega$ consists of a pair $(G,
\mathcal{B})$, where $G$ is a graph and $\mathcal{B}$ is a
collection of cycles of $G$, called \emph{balanced}, obeying the
\emph{theta property}. A \emph{theta} graph consists of a pair of
distinct vertices and three internally disjoint paths between them;
the \emph{theta property} is the property that no theta subgraph
contains exactly two balanced cycles. Cycles not in $\mathcal{B}$
are called \emph{unbalanced}. We write $\Omega = (G, \mathcal{B})$
and say $G$ is the \emph{underlying graph} of $\Omega$. Throughout
graphs are finite, and may have loops and parallel edges.

Biased graphs were introduced by Zaslavsky in \cite{Zaslavsky-I},
and in \cite{Zaslavsky-II} Zaslavsky defined a natural matroid with
ground set the edges of a biased graph $(G, \mathcal{B})$, which we
may describe in terms of its circuits as follows. A set $C \subseteq
E(G)$ is a circuit in this matroid if in $(G, \mathcal{B})$, $C$
induces one of: a balanced cycle, two edge-disjoint unbalanced
cycles intersecting in only one vertex, two vertex-disjoint
unbalanced cycles along with a path connecting them, or a theta
subgraph with all cycles unbalanced.

A matroid is \emph{frame} if it may be extended such that it
possesses a basis $B_0$ (a frame) such that every element is spanned
by at most two elements of $B_0$. Zaslavsky \cite{Zaslavsky-Frame}
has shown that the class of frame matroids is precisely that of
matroids arising from biased graphs as described above (whence these
have also been called \emph{bias} matroids). Given a biased graph
$\Omega = (G, \mathcal{B})$ we denote by $F(\Omega)$ or $F(G,
\mathcal{B})$ the frame matroid arising from $\Omega$. Observe that
given a graph $G$, if $\mathcal{B}$ contains all cycles in $G$, then
$F(G, \mathcal{B})$ is the cycle matroid $M(G)$ of $G$, and that
$F(G, \emptyset)$ is the bicircular matroid of $G$. Frame matroids
also include Dowling geometries \cite{Dowling} (see also, for
example, \cite{MR2459453} and \cite{Zaslavsky-I}).

Whitney~\cite{Whitney33} characterised when two graphs give rise to the same graphic matroid, and Matthews \cite{Matthews}  characterized which graphs have isomorphic graphic bicircular matroids. To state Whitney's result we first need some definitions.
Given a graph $H$ and a set of edges $Y$, we let $H|Y$ denote the subgraph of $H$ with edge set $Y$ and no isolated
vertices.
Let $H$ be a graph, and let $(X_1, X_2)$ be a partition of $E(H)$
such that $V(H|X_1) \cap V(H|X_2) = \{u_1, u_2\}$. We say that $H'$
is obtained by a \emph{Whitney flip} of $H$ on $\{u_1,u_2\}$ if $H'$
is a graph obtained by identifying vertices $u_1,u_2$ of $H|X_1$
with vertices $u_2,u_1$ of $H|X_2$, respectively.
A graph $H'$ is \emph{2-isomorphic to} $H$ if $H'$ is obtained from
$H$ by a sequence of the operations: Whitney flips, identifying two
vertices from distinct components of a graph, or partitioning a
graph into components each of which is a block of the original
graph.
\begin{theorem}[Whitney's $2$-Isomorphism Theorem, \cite{Whitney33}]
Let $G$ and $H$ be graphs without isolated vertices. Then $M(G)\cong
M(H)$ if and only if $G$ and $H$ are $2$-isomorphic.
\end{theorem}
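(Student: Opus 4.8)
The plan is to prove the two implications separately, dispatching the backward direction by a direct check and concentrating on the forward direction.

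For the backward direction I would verify that each of the three generating operations leaves the collection of edge sets forming cycles of the graph unchanged; since the circuits of $M(G)$ are precisely these cycles, it follows that the operation produces a graph with the same cycle matroid (on the same ground set). A Whitney flip on a partition $(X_1,X_2)$ with $V(H|X_1)\cap V(H|X_2)=\{u_1,u_2\}$ merely re-glues the two sides across the $2$-vertex-cut $\{u_1,u_2\}$: any cycle meeting both $X_1$ and $X_2$ must pass through both $u_1$ and $u_2$, so it survives the relabelling, and no new cycle is created. Identifying vertices of distinct components produces no cycle crossing between the components, and separating a graph into its blocks leaves every cycle inside a single block. Hence all three operations fix the set of circuits, and composing them gives $M(G)\cong M(H)$.

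For the forward direction, let $\varphi\colon M(G)\to M(H)$ be an isomorphism. I would first reduce to the $2$-connected case. The connected components of $M(G)$ are exactly the cycle matroids of the blocks of $G$ --- the maximal $2$-connected subgraphs, together with the coloops arising from bridges and the loops arising from loops --- and similarly for $H$. As the decomposition of a matroid into its connected components is unique, $\varphi$ induces a bijection between the blocks of $G$ and those of $H$ under which corresponding block matroids are isomorphic. Since the operations of splitting into blocks and re-identifying vertices of distinct components realise every gluing of a fixed multiset of blocks, it is enough to prove the theorem for $G$ and $H$ both $2$-connected.

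Assume now $G$ and $H$ are $2$-connected with $M(G)\cong M(H)$, and induct on $|E(G)|$. If $G$ is $3$-connected I would appeal to the rigidity of $3$-connected representations to conclude $G\cong H$. Otherwise $M(G)$ admits a $2$-separation $(X_1,X_2)$, corresponding to a $2$-vertex-cut $\{u_1,u_2\}$ of $G$, whose image $(\varphi(X_1),\varphi(X_2))$ is a $2$-separation and hence a $2$-vertex-cut $\{w_1,w_2\}$ of $H$. Adding a virtual edge joining the two cut vertices on each side yields strictly smaller $2$-connected graphs whose cycle matroids are matched by $\varphi$; applying the induction hypothesis to each side and then reassembling, the two possible orientations in which the sides may be glued back differ exactly by a Whitney flip on $\{u_1,u_2\}$. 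This produces a sequence of flips carrying $G$ to $H$.

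The main obstacle is the $3$-connected base case. The approach is to single out the vertex stars among the cocircuits of $M(G)$: for a $3$-connected graph $G$, a cocircuit $B$ is a vertex star $\delta(v)$ precisely when $M(G)\backslash B$ is connected, because deleting the edges of $\delta(v)$ leaves the $2$-connected graph $G-v$, whereas deleting any other bond $\delta(X)$ splits $G$ into the two edge-bearing pieces $G[X]$ and $G[\overline{X}]$ and so disconnects the matroid. Recognising the vertex stars in this intrinsic way recovers the vertex set and the edge--vertex incidences (each edge lying in exactly two stars), and hence reconstructs $G$ up to isomorphism from $M(G)$ alone. Proving this canonical characterisation of vertex stars is where the essential work lies; the remainder is bookkeeping with $2$-separations and the three operations defining $2$-isomorphism.
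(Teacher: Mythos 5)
The paper does not prove this statement at all: it is Whitney's classical $2$-isomorphism theorem, quoted from \cite{Whitney33} and used as a black box, so there is no in-paper argument to compare yours against. Judged on its own merits, your outline follows the standard modern proof (essentially Truemper's argument, as presented in Oxley's book \cite{Oxley92}): sufficiency by checking that each operation preserves the edge sets of cycles (fine), reduction to the $2$-connected case via blocks and matroid components (fine), an induction through $2$-separations, and a $3$-connected base case by reconstruction. Your base-case idea is correct: in any graph a bond $\delta(X)$ for which both sides contain an edge disconnects the cycle matroid upon deletion, so every cocircuit whose deletion leaves a connected matroid is a vertex star, and $3$-connectivity of $G$ gives the converse. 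One point you elide: the characterisation must also be invoked on the $H$ side, where you cannot yet assume $3$-connectivity; this is rescued because the implication you actually need there (connected deletion implies vertex star) holds in every graph without isolated vertices, after which a counting argument (each edge lies in exactly two stars, and the images of the stars of $G$ cover every edge of $H$ twice) yields the vertex bijection and hence $G\cong H$.

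The genuine gaps are in the inductive step. First, ``otherwise $M(G)$ admits a $2$-separation'' is false as stated: a triangle is $2$-connected but not $3$-connected, yet $M(G)=U_{2,3}$ has no $2$-separation; such small graphs (likewise small parallel classes) must be dispatched separately for the induction to be well founded. Second, and more substantively, the $2$-separation $(\varphi(X_1),\varphi(X_2))$ of $M(H)$ need not come from a $2$-vertex cut of $H$: the subgraph $H|\varphi(X_1)$ may be disconnected, in which case the two sides share $c_1+c_2>2$ vertices, where $c_i$ is the number of components of $H|\varphi(X_i)$. A standard repair is to choose $X_1$ minimal among all sides of $2$-separations --- a purely matroidal condition, hence preserved by $\varphi$ --- and to prove that minimal sides induce connected subgraphs; some such argument is indispensable. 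Third, the sentence ``applying the induction hypothesis to each side and then reassembling, the two possible orientations \dots differ exactly by a Whitney flip'' conceals most of the real work of published proofs: you must show that the pieces-with-virtual-edge have matched cycle matroids (basepoint to basepoint), and that a sequence of flips of a piece lifts to a sequence of flips of the whole graph even when the flipping pairs interact with the virtual edge; only then does the two-gluings observation close the induction. All of this is completable along known lines, but as written these steps are assertions rather than arguments.
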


Six families of biased graphs whose frame matroids are graphic are
defined and exhibited in Section~\ref{sec:SixFamilies}. A biased
graph in any of these families is obtained from a graph $G$ by a
simple operation, and it is easily checked that the frame matroid
arising from the resulting biased graph is isomorphic to the cycle
matroid of $G$. For ease of reference, we name them: (1) balanced,
(2) fat thetas, (3) curlings, (4) pinches, (5) 4-twistings, and (6)
consecutive odd-twistings. We call the corresponding operation in
each case by the same name. Our main result says that every graphic
frame matroid comes from a biased graph in one of these families.

\begin{theorem}\label{2-con}
Let $G$ be a 2-connected graph and $\Omega$ a biased graph with
$F(\Omega) = M(G)$. Then there is a graph $H$ 2-isomorphic to $G$
such that either $\Omega$ is balanced with underlying graph $H$, or
$\Omega$ is obtained from $H$ as a fat theta, a curling, a pinch, a
4-twisting, or a consecutive odd-twisting.
\end{theorem}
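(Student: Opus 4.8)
The plan is to separate the argument according to whether $\Omega$ is balanced, working throughout in the connected case: since $G$ is $2$-connected, $M(G) = F(\Omega)$ is connected, and this forces the underlying graph $H$ of $\Omega$ to be connected with $\Omega$ itself matroid-connected. If $\Omega$ is balanced, then $F(\Omega) = M(H)$, so $M(H) \cong M(G)$, and Whitney's $2$-Isomorphism Theorem gives immediately that $H$ is $2$-isomorphic to $G$; this disposes of family (1). Everything else concerns the case that $\Omega$ is unbalanced, and the first move there is a rank count: for a connected unbalanced biased graph the frame matroid has rank $|V(H)|$, whereas $M(G)$ has rank $|V(G)| - 1$, so $|V(H)| = |V(G)| - 1$. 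Thus $H$ carries the same edges as $G$ but has exactly one fewer vertex, and the whole problem becomes that of recognising $\Omega$ as the result of compressing (a $2$-isomorph of) $G$ to one fewer vertex.

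To locate where this compression occurs I would exploit the cocircuits. Under the matroid isomorphism the cocircuits of $F(\Omega)$ are exactly the bonds of $G$; on the biased-graph side, a cocircuit of a connected unbalanced $\Omega$ is a minimal edge set whose deletion creates a balanced component, and these come in a small number of recognisable shapes (a star at a vertex whose removal leaves the rest balanced, or a bond splitting off a balanced piece). Matching the two descriptions pins down a distinguished vertex, or distinguished pair of vertices, of $H$ that is the image of the identified part of $G$, and it forces the pattern of balanced and unbalanced cycles around that structure. Equivalently, one reconstructs a graph $H'$ from $H$ by splitting this distinguished structure so that $M(H') \cong M(G)$, applies Whitney to relate $H'$ to $G$, and then reads off from the bias on $H$ precisely which cycles became unbalanced under the compression.

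The remaining, and hardest, work is to show that this local pattern is always one of the five unbalanced families. I would organise the case analysis by the connectivity of $M(G)$ and its $2$-separations: when $M(G)$ is highly connected the only possibility is a pinch, a single vertex identification, while $2$-separations allow the identification to be combined with a Whitney-flip, producing the fat thetas, curlings, and the two kinds of twistings. The central obstacle is \emph{completeness} --- proving that no configuration outside the six families can occur. This is exactly where graphicness must be used in full force: the theta property, together with the requirement that $F(\Omega)$ be graphic (in particular regular, with no $U_{2,4}$-minor), must be invoked to eliminate every exotic arrangement of unbalanced cycles and to show that the twistings are forced rather than merely permitted. I expect the $4$-twisting and consecutive odd-twisting cases to be the most delicate, since there the vertex identification is entangled with a flip, and one must rule out, via explicit forbidden substructures, all the near-misses that would otherwise enlarge the list.
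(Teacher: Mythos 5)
Your balanced case and your preliminary rank count are correct (a connected unbalanced biased graph has frame matroid of rank $|V(\Gamma)|$, so $\Gamma$ has one vertex fewer than $G$), but everything after that is a plan rather than a proof, and the plan is missing the two pillars that make the paper's argument work. The paper never performs a bare-hands cocircuit analysis. Instead it observes that graphic implies binary and invokes Zaslavsky's characterization of biased graphs with binary frame matroids (Theorem~\ref{Zaslavsky}), which immediately splits $\Omega$ into four forms: balanced, fat theta, several unbalanced blocks each with a separating blocking vertex, or a single unbalanced block with no two vertex-disjoint unbalanced cycles. The third form is resolved by switching and blocking-vertex lemmas (Lemmas~\ref{sign-bal}--\ref{Zasvaslky-(4)-1}), yielding curlings; for the fourth form, the absence of vertex-disjoint unbalanced cycles gives $F(\Omega)=L(\Omega)$, and Shih's theorem on graphic lift matroids of signed graphs (Theorem~\ref{Shih-Pivotto}) delivers pinch, $4$-twisting, or consecutive twisting, after which Lemmas~\ref{Case2}--\ref{Case-k-even} reduce consecutive even-twistings to pinches. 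Your proposal contains no counterpart to either Zaslavsky's or Shih's theorem; the ``completeness'' step you defer to a case analysis is precisely the content of those two results, and reproving them from a $U_{2,4}$-exclusion argument would be a substantial undertaking that your sketch does not begin.

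Beyond the missing mechanism, two of the structural guideposts you propose to organize the case analysis around are false, so the analysis as outlined would go wrong. First, high connectivity of $M(G)$ does \emph{not} force $\Omega$ to be a pinch: simple curlings represent $4$-connected (indeed arbitrarily highly connected) graphs, which is exactly the content of Corollary~\ref{4-con}. Second, the $4$-twistings and consecutive odd-twistings are \emph{not} tied to $2$-separations of $M(G)$: by Corollary~\ref{3-con} they occur for $3$-connected $G$ with at least four vertices, whose cycle matroid is $3$-connected and hence has no $2$-separations at all (the attachment sets in those families are triples of vertices, not pairs). So organizing the unbalanced case by the $2$-separations of $M(G)$ would both let in representations you intend to exclude and miss families you need to produce.
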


Increasing the connectivity of the graph $G$ in Theorem~\ref{2-con}
reduces the possible biased graph representations of $F(\Omega)$.
Asking that $G$ be 3-connected removes one family from the list of
possibilities, and simplifies the curling operation. The following
is an immediate consequence of Theorem~\ref{2-con}.


\begin{corollary}\label{3-con}
Let $G$ be a $3$-connected graph with at least four vertices and $\Omega$ a biased graph with
$F(\Omega)=M(G)$. Then either $\Omega$ is balanced with underlying
graph $G$, or obtained from $G$ as a simple curling, a pinch, a
4-twisting, or a consecutive odd-twisting.
\end{corollary}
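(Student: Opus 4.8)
The plan is to derive Corollary~\ref{3-con} directly from Theorem~\ref{2-con} by exploiting the extra structure that $3$-connectivity imposes. Since a $3$-connected graph $G$ is in particular $2$-connected, Theorem~\ref{2-con} applies and tells me that $\Omega$ arises from some graph $H$ that is $2$-isomorphic to $G$ via one of the six listed families. The key observation is that for a $3$-connected graph on at least four vertices, the Whitney flips and other $2$-isomorphism operations are essentially trivial: a $3$-connected graph has no $2$-separation, so there is no partition $(X_1,X_2)$ of $E(H)$ with $V(H|X_1)\cap V(H|X_2)$ of size exactly two that yields a nontrivial flip, and $G$ is connected with a single block. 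Hence I would first argue that any graph $H$ that is $2$-isomorphic to $G$ must in fact be isomorphic to $G$ when $G$ is $3$-connected with $|V(G)|\geq 4$; this is a standard consequence of Whitney's theorem (uniqueness of the embedding up to isomorphism for $3$-connected matroids/graphs). This lets me replace $H$ by $G$ throughout.

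Next I would dispose of the fat theta family. The point is that a fat theta introduces a pair of vertices joined by a structure that creates a $2$-separation or a low-degree configuration incompatible with $G$ being $3$-connected on at least four vertices. Concretely, I expect that the underlying graph of a fat theta, or the way the operation modifies $H$, forces either a vertex of degree two, a $2$-vertex-cut, or a graph on fewer than four vertices after the relevant identification, contradicting the hypotheses. So I would check, using the explicit definition of the fat theta operation from Section~\ref{sec:SixFamilies}, that a fat theta cannot have a $3$-connected underlying graph on four or more vertices, thereby eliminating it from the list and leaving exactly the balanced case together with curling, pinch, $4$-twisting, and consecutive odd-twisting.

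Finally I would address the simplification of the curling operation. Theorem~\ref{2-con} allows a general curling, but in the $3$-connected setting I claim this must reduce to a \emph{simple} curling. Here I would again invoke $3$-connectivity: the general curling operation presumably involves a choice of subgraph or a splitting at a vertex that, in the $2$-connected case, may be nontrivial, but whose nontrivial versions again manufacture a $2$-separation in the underlying graph. So any curling yielding a $3$-connected $G$ on at least four vertices must be of the simple type. Collecting these reductions—$H\cong G$, elimination of fat thetas, and restriction of curling to simple curling—gives precisely the statement of the corollary.

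The main obstacle I anticipate is the fat theta elimination step, since it requires a careful reading of the explicit fat theta construction to confirm that its underlying graph genuinely cannot be $3$-connected on four or more vertices; the danger is a boundary configuration (for instance a small graph where the fat theta structure coincidentally looks $3$-connected) that must be ruled out by the $|V(G)|\geq 4$ hypothesis. The curling simplification is likely easier but similarly depends on matching the formal definition of ``simple curling'' to the constraint imposed by the absence of $2$-separations, so I would want both arguments to hinge cleanly on the single fact that a $3$-connected graph admits no nontrivial $2$-separation.
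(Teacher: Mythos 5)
Your proposal is correct and follows essentially the same route as the paper, which states Corollary~\ref{3-con} as an immediate consequence of Theorem~\ref{2-con}: apply the theorem, use Whitney's theorem to get $H\cong G$ for $3$-connected $G$ on at least four vertices, and use the absence of $2$-vertex-cuts to rule out fat thetas and to force every curling to be simple (the paper's supporting machinery for the latter is Corollary~\ref{type-2-con} and Corollary~\ref{Zalavsky-(4)-3-con}, i.e.\ the matroid-side observation that $3$-connectivity of $F(\Omega)$ forces each unbalanced block to be a loop, which is equivalent to your graph-side $2$-separation argument). Your two anticipated ``obstacles'' are exactly the details the paper leaves implicit, and both resolve as you expect: an internal vertex in any $H_i$ of a fat theta or curling creates a $2$-cut, and the $|V(G)|\geq 4$ hypothesis excludes the degenerate triangle-shaped fat theta.
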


If we further demand that $G$ be 4-connected, we are still left with
two possible families of biased graph representations. The following
is an immediate consequence of Corollary~\ref{3-con}.

\begin{corollary}\label{4-con}
Let $G$ be a $4$-connected graph with at least five vertices and
$\Omega$ a biased graph with $F(\Omega)=M(G)$. Then either $\Omega$
is balanced with underlying graph $G$, or obtained from $G$ as a
simple curling or a pinch.
\end{corollary}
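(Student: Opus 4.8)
The plan is to derive this directly from Corollary~\ref{3-con} by showing that, under the stronger hypothesis of $4$-connectivity, the two remaining ``twisting'' families cannot occur. A $4$-connected graph on at least five vertices is in particular $3$-connected on at least four vertices, so Corollary~\ref{3-con} applies and tells us that $\Omega$ is balanced with underlying graph $G$, or is obtained from $G$ by a simple curling, a pinch, a $4$-twisting, or a consecutive odd-twisting. It therefore suffices to rule out the last two possibilities.

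Both the $4$-twisting and the consecutive odd-twisting operations are performed relative to a vertex cut of $G$ of order at most three: the $4$-twisting reassembles $G$ across a separation determined by a small separating set, while a consecutive odd-twisting is carried out along a sequence of low-order separations (the ``consecutive'' structure being supported on successive small cuts). First I would extract from the definitions in Section~\ref{sec:SixFamilies} the precise separating set that each operation requires, and record that in each case the operation is defined only when $G$ possesses a vertex cut of size at most three that splits the graph into two parts each containing an edge. Since $G$ is $4$-connected with at least five vertices, it admits no such separation; hence neither a $4$-twisting nor a consecutive odd-twisting can be applied to $G$. This leaves only the balanced, simple curling, and pinch possibilities, which is exactly the claim.

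The main obstacle is the bookkeeping in the previous step: one must verify, for each of the two operations, that its defining configuration genuinely forces a \emph{proper} separation of order at most three, rather than degenerating into a trivial rearrangement already counted among the balanced, curling, or pinch cases. In particular I would check the boundary situations---where the two sides of the putative separation are as small as the definitions permit---to confirm that $4$-connectivity, together with the hypothesis of at least five vertices, excludes them. Once this is confirmed the corollary follows immediately from Corollary~\ref{3-con}.
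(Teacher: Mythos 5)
Your proposal is correct and takes essentially the same approach as the paper, which states Corollary~\ref{4-con} as an immediate consequence of Corollary~\ref{3-con}: one applies that corollary and then uses $4$-connectivity plus the five-vertex hypothesis to exclude the $4$-twisting and consecutive odd-twisting families, since each such configuration forces either a vertex cut of size at most three (the attachment vertices of some part $H_i$ with an internal vertex, or the set $\{w_{i-1},w_{i+1},z\}$ containing the neighbourhood of $w_i$ in the degenerate consecutive case) or a graph on at most four vertices. The boundary bookkeeping you flag does go through exactly as you anticipate.
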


Finally, we note that for any $k \geq 4$, if $G$ is a $k$-connected
graph on $n$ vertices, then in general there may be up to ${n
\choose 2} + n$ non-isomorphic biased graphs $\Omega$ with
$F(\Omega)$ isomorphic to $M(G)$ (obtained as pinches and simple
curlings of $G$). Corollary~\ref{4-con} says, however, that these
will be all the biased graph representations of $M(G)$.

The remainder of this paper is organized as follows. First we
exhibit the six families of biased graphs whose frame matroids are
graphic (appearing in Theorem~\ref{2-con}). We then show that the
frame matroids arising from these biased graphs are indeed graphic,
and that every graphic frame matroid arises from a biased graph in
one of these families.

\section{Six families of biased graphs with graphic frame matroids} \label{sec:SixFamilies}

We now describe six families of biased graphs whose frame matroids
are graphic. For any positive integer $n$, set
$[n]=\{1,2,\cdots,n\}$.

\smallskip
\emph{1.\ Balanced biased graphs}. \ Let $\Omega = (H, \mathcal{B})$
be a biased graph. If every cycle of $H$ is in $\mathcal{B}$, then
$\Omega$ is \emph{balanced}. Clearly, $F(\Omega) = M(H)$, so
$F(\Omega)$ is graphic.

\smallskip
\emph{2.\ Fat thetas}. \ To describe our second family, let $H_1,
H_2, H_3$ be non-empty graphs with distinct vertices $x_i, y_i \in
V(H_i)$. Let $H$ be obtained from $H_1, H_2, H_3$ by identifying
$y_i$ and $x_{i+1}$  to a vertex  $w_i$ for every $i\in[3]$ (where
the indices are modulo $3$; see the left of
Figure~\ref{fig:fat_theta}).
\begin{figure}[htbp]
\begin{center}
\includegraphics[page=1,height=4cm]{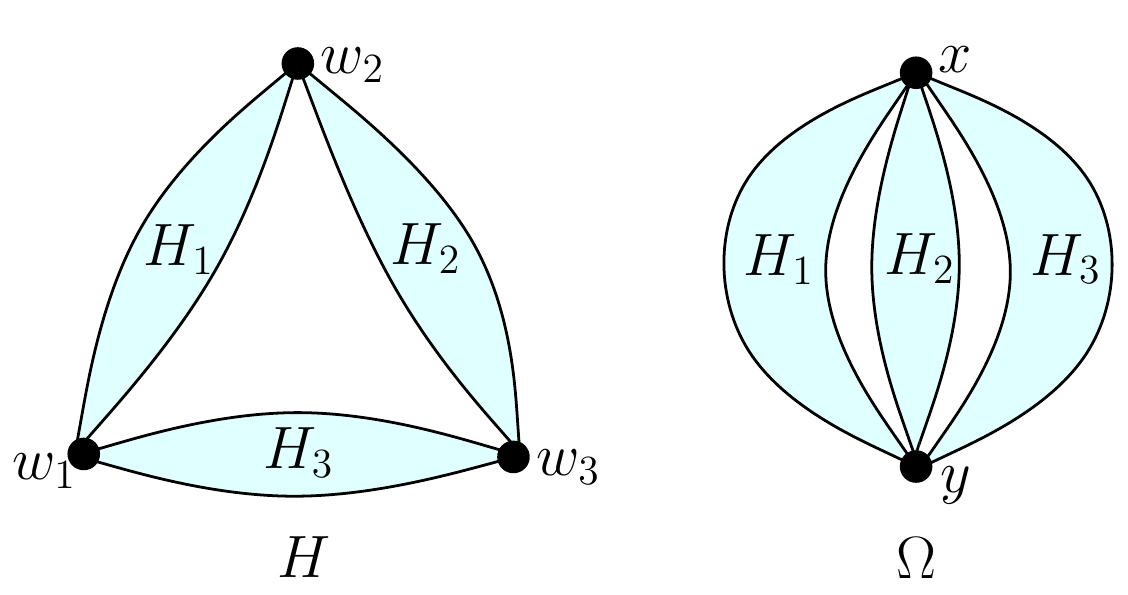}
\end{center}
\caption{A fat theta.} \label{fig:fat_theta}
\end{figure}
Let $\Omega=(\Gamma,\mathcal{B})$ be a biased graph, where $\Gamma$
is obtained from $H_1,H_2,H_3$ by identifying $x_1, x_2, x_3$ to a
vertex $x$ and identifying $y_1, y_2, y_3$ to a vertex $y$. A cycle
of $\Gamma$ is in $\mathcal{B}$ if and only if $E(C)$ is completely
contained in one of $H_1, H_2$ or $H_3$ (see the right of
Figure~\ref{fig:fat_theta}). Then we say that $\Omega$ is a
\emph{fat theta} obtained from $H$.

\smallskip

A biased graph $\Omega$ is a \emph{signed graph} if its edges can be
labelled by $1$ or $-1$ such that a cycle $C$ is balanced in
$\Omega$ if and only if $E(C)$ contains an even number of edges
labelled $-1$. In all figures of signed graphs we adopt the
following convention. A shaded area around a vertex denotes that all
the edges in that area incident with that vertex are labelled with
$-1$. Bold edges are also labelled $-1$. All unmarked edges are
labelled $1$.

\smallskip
\emph{3.\ Curlings}.  \
Let $H$ be a 2-connected graph, $v \in V(H)$, and
suppose that there are distinct vertices $v_1, \ldots, v_k$ and
connected subgraphs $H_1, \ldots, H_k$ of $H$  such that $V(H_i)\cap
V(H | (E(H)-E(H_i))) = \{v,v_i\}$.
Suppose moreover that every edge incident with $v$ is contained in some
$H_i$. Let $\Omega$ be the signed graph obtained from $H$ by first
labeling all edges incident with $v$ by $-1$ and then changing any
such edge $e=uv$ to $uv_i$ when $u\in H_i$ (if $u=v_i$ this produces
a loop at $v_i$), and keeping all other edges not incident with $v$
unchanged and labelled by $1$ (see Figure~\ref{fig:curling}). Then
we say that $\Omega$ is a \emph{curling} of $H$. If, for every $i$, every edge in $H_i$ is between $v$
and $v_i$ then we call $\Omega$ a  \emph{simple curling}.
\begin{figure}[htbp]
\begin{center}
\includegraphics[page=2,height=5.5cm]{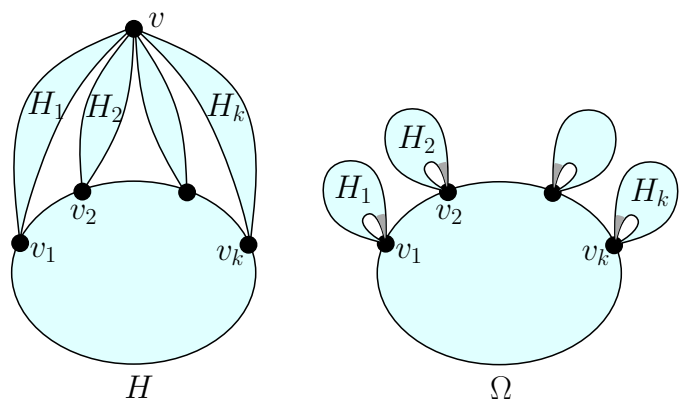}
\end{center}
\caption{A curling.} \label{fig:curling}
\end{figure}

\smallskip
\emph{4.\ Pinches}. \ If $\Omega$ is obtained from $H$ by
identifying two vertices $v_1$ and $v_2$ to  a new vertex $v$ and
labeling all edges incident with $v_1$ by $-1$ and all other edges
by $1$, then we say $\Omega$ is a \emph{pinch}. An edge with
endpoints $v_1, v_2$ becomes an unbalanced loop incident to $v$
(Figure~\ref{fig:A_pinch}).
\begin{figure}[htbp]
\begin{center}
\includegraphics[page=3,height=5cm]{figures.pdf}
\end{center}
\caption{A pinch.} \label{fig:A_pinch}
\end{figure}

\smallskip
\emph{5.\ 4-twistings}. \ Let $H_1,H_2,H_3,H_4$ be graphs (not
necessarily all non-empty) with distinct vertices $x_i,y_i,z_i\in
V(H_i)$. Let $H$ be obtained from $H_1,H_2,H_3,H_4$ by identifying
$x_i,y_{3-i},z_{i+2}$ to a vertex $w_i$ for every $i\in[4]$ (where
the indices are modulo 4). Let $\Omega$ be a signed graph obtained
from $H_1,H_2,H_3,H_4$ by identifying $x_1$, $x_2,x_3,x_4$ to a
vertex $x$, identifying $y_1,y_2,y_3,y_4$ to a vertex $y$ and
identifying $z_1,z_2,z_3,z_4$ to a vertex $z$, and with all edges
originally incident with $x_1, y_2$ or $z_3$ labelled by $-1$ and
all other edges labelled by $1$ (see Figure~\ref{fig:fourtwisting}).
Then we say that $\Omega$ is a \emph{4-twisting} of $H$.
\begin{figure}
\centering
\includegraphics[page=4,height=7cm]{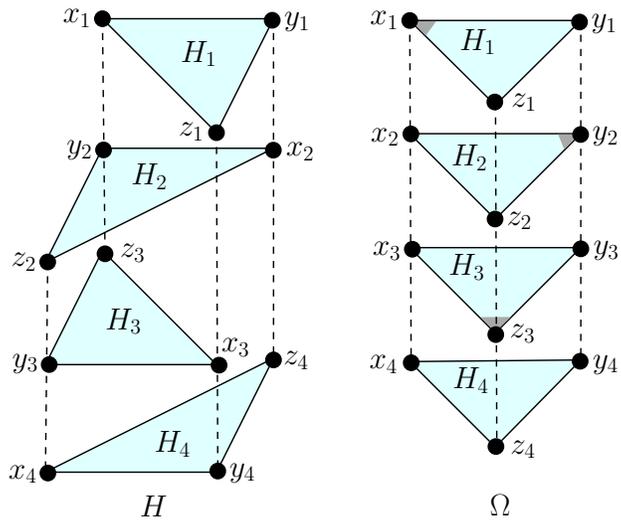}
 \caption{A 4-twisting. Vertices on a same dashed line are identified.}
 \label{fig:fourtwisting}
\end{figure}

\smallskip
\emph{6.\ Consecutive odd-twistings}. \
Let $H_1, \ldots, H_k$
(for $k \geq 3$), be graphs with distinct vertices $x_i, y_i, z_i
\in  V(H_i)$ for $i \in [k]$. Let $H$ be a graph obtained from $H_1,
\ldots, H_k$ by identifying $z_1, z_2, \ldots, z_k$ to a vertex $z$
and for each $i \in [k]$ identifying $y_{i-1}$ and $x_i$ to a vertex
$w_i$ (where the indices are modulo $k$). Let $\Omega$ be the signed
graph obtained from $H_1, \ldots, H_k$ by identifying
$y_{i-1},z_i,x_{i+1}$ to a vertex $u_i$ for every $i\in[k]$ (where
the indices are modulo $k$), and with all edges originally incident
with $y_1$ or $x_2$ labelled by $-1$ and all other edges labelled by
$1$ (see Figure~\ref{fig:cons-twisting}). Then we say that $\Omega$
is a \emph{consecutive twisting} of $H$. If $k$ is odd then $\Omega$
is a \emph{consecutive odd-twisting} of $H$.

\begin{figure}
\centering
\includegraphics[page=5,height=5cm]{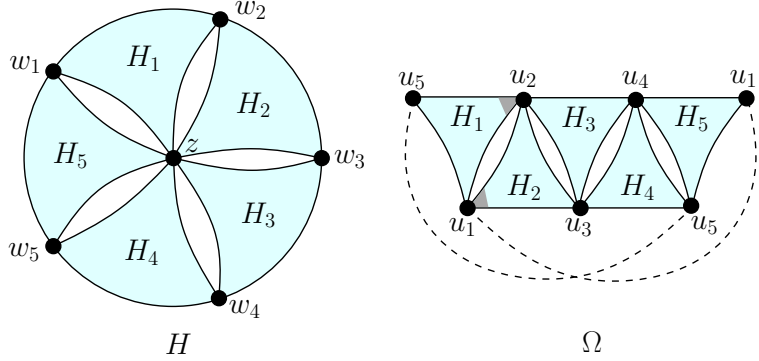}
\caption{A  consecutive odd-twisting. Vertices on a same dashed line
are identified.} \label{fig:cons-twisting}
\end{figure}

\section{All graphic frame matroids arise from these six families}
In preparation for the proof of our main result, we now introduce
some notation.
Let $H$ be a graph and $X\subseteq V(H)$.
We say $X$  is a \emph{vertex-cut} of $H$ if $H\del X$ has
at least one more component than $H$. When $|X|=1$, we also say $X$
is a \emph{cut-vertex} of $H$.
A \emph{block} of $H$ is a maximal connected
subgraph which has no cut-vertex. An \emph{ end-block} is a block
containing at most one cut-vertex.

In the rest of the paper, let $G$ be a 2-connected graph and let
$\Omega$ be a biased graph with $F(\Omega)=M(G)$. We let $\Gamma$
denote the underlying graph of $\Omega$ and $E=E(\Gamma)$.

A \emph{handcuff} consists of a pair of cycles $C_1$ and $C_2$, and
a path $P$ connecting $C_1$ and $C_2$ such that $P$ meets $C_i$ at
$u_i$ and nowhere else and $C_1$ meets $C_2$ only at
$\{u_1\}\cap\{u_2\}$. If $u_1\neq u_2$ then the handcuff is
\emph{loose}; otherwise it is \emph{tight}. A subgraph or edge set
of $\Omega$ is \emph{balanced} if each cycle in it is balanced;
otherwise it is \emph{unbalanced}. Moreover, it is
\emph{contra-balanced} if it has no balanced cycles. A vertex $v$ of
a biased graph $\Omega$ is a \emph{blocking vertex} if $\Omega\del
v$ is balanced.

Zaslavsky has characterized those biased graphs $\Omega$ for which
$F(\Omega)$ is binary.

\begin{theorem}[Zaslavsky \cite{Zaslavsky87}] \label{Zaslavsky}
Let $\Omega$ be a biased graph. Then $F(\Omega)$ is binary if and
only if each connected component of $\Omega$ has one of the
following forms.
\begin{itemize}
    \item[$(1)$] It is balanced.
    \item[$(2)$] It is a fat theta.
    \item[$(3)$] It is a signed graph with more than one unbalanced block, and each unbalanced block $B_i$ has a vertex $v_i$ such that $B_i\del v_i$ is balanced and $v_i$ is a cut-vertex separating $B_i$ from all other unbalanced blocks.
    \item[$(4)$] It is a signed graph with just one unbalanced block, and has no two vertex-disjoint unbalanced cycles.
\end{itemize}
\end{theorem}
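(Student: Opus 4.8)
The plan is to characterize binarity through the unique minimal non-binary uniform matroid $U_{2,4}$, using Tutte's theorem that a matroid is binary if and only if it has no $U_{2,4}$-minor, together with the fact that every minor of a frame matroid $F(\Omega)$ is itself the frame matroid of a biased graph obtained from $\Omega$ by deleting and contracting edges (the contraction of an unbalanced edge being accompanied by the usual rebiasing). Since every circuit in the description from the introduction (balanced cycle, tight handcuff, loose handcuff, contra-balanced theta) is a connected subgraph, each circuit lives inside a single connected component; hence $F(\Omega)$ is the direct sum of the frame matroids of the components of $\Omega$, and $F(\Omega)$ is binary if and only if each component is. So throughout I may assume $\Omega$ is connected, which matches the theorem's requirement imposed on each component. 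Both directions are then proved through the $U_{2,4}$ obstruction, and the first concrete step is to list all biased graphs $\Omega_0$ with $F(\Omega_0)\cong U_{2,4}$: a short rank-$2$ analysis shows these live on two vertices and are of two kinds, the \emph{non-signed} one consisting of four parallel edges no two of which form a balanced digon (with edge signs a pigeonhole argument forces a balanced digon among any three parallel edges, so this is genuinely not a signed graph), and \emph{signed} ones such as two unbalanced loops together with one balanced and one unbalanced link between their vertices.

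For the sufficiency direction I would verify that each of the four forms yields a binary matroid. Forms $(1)$ and $(2)$ are immediate: a balanced $\Omega$ has $F(\Omega)=M(\Gamma)$, which is graphic and hence binary, and a fat theta has graphic (hence binary) frame matroid by the construction of Section~\ref{sec:SixFamilies}. For forms $(3)$ and $(4)$ I would instead verify the symmetric-difference criterion directly: a matroid is binary if and only if the symmetric difference of any two of its circuits is a disjoint union of circuits. In form $(4)$ the hypothesis that there are no two vertex-disjoint unbalanced cycles means $F(\Omega)$ has no loose-handcuff circuit, so its circuits are only balanced cycles, tight handcuffs, and contra-balanced thetas; the single-unbalanced-block hypothesis keeps these configurations rigid enough that $C_1 \triangle C_2$ can be checked, combination by combination, to be a disjoint union of circuits. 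In form $(3)$ the cut-vertices $v_i$ separating the unbalanced blocks confine any two interacting circuits to a controlled neighbourhood of a single unbalanced block $B_i$ (which, having the blocking vertex $v_i$, behaves like a pinch), reducing the check to the same bounded case analysis.

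The necessity direction (binary $\Rightarrow$ one of the forms) is where the real work lies, and I would prove its contrapositive: if a connected unbalanced $\Omega$ is none of $(2)$, $(3)$, $(4)$, then one of the $U_{2,4}$-realizing biased graphs above occurs as a minor. I split on whether $\Omega$ is a signed graph. If it is not, then the theta property guarantees that the obstruction to a consistent $\pm$ signing is a contra-balanced theta, i.e.\ two vertices joined by three internally disjoint paths whose three cycles are all unbalanced; when these three paths ``fatten'' to the exact incidence pattern of Section~\ref{sec:SixFamilies}, $\Omega$ is a fat theta and we are in form $(2)$, and otherwise (a fourth unbalanced bundle, an unbalanced side, or an extra unbalanced cycle off the theta) deletion and contraction expose the non-signed four-bundle $U_{2,4}$. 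If $\Omega$ is a signed graph, I analyze its unbalanced blocks: failing $(3)$ means some unbalanced block has no blocking vertex, or two unbalanced blocks are not separated by their blocking vertices; failing $(4)$, when there is a single unbalanced block, means there exist two vertex-disjoint unbalanced cycles. In each case one locates two negative cycles that are either vertex-disjoint or share a single vertex in the ``wrong'' way, and contracts the connecting structure down to two vertices to produce two unbalanced loops together with a balanced and an unbalanced link, namely the signed $U_{2,4}$.

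The main obstacle is this last, necessity, step. The two delicate points are, first, pinning down the exact boundary at which a contra-balanced theta is forced to extend to a $U_{2,4}$, so that the \emph{only} surviving non-signed-graphic binary frame matroids are precisely the fat thetas; and second, organizing the signed-graph case analysis so that every way of simultaneously violating $(3)$ and $(4)$ reliably yields two negative cycles that contract to the signed $U_{2,4}$ — all while correctly tracking how contraction of unbalanced edges rebiases the remaining cycles, so that the claimed minor genuinely appears rather than being destroyed by a rebalancing.
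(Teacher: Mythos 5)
The first thing to note is that the paper contains no proof of this statement to compare against: it is Zaslavsky's theorem, imported verbatim from \cite{Zaslavsky87} and used as a black box. Judged as a free-standing proof of Zaslavsky's result, your proposal is a plan rather than a proof: the two steps you yourself label ``the main obstacle'' are precisely where the content of the theorem lies, and they are left unproved. For necessity in the non-signed case, the claim that binarity forces a biased graph containing a contra-balanced theta to be a fat theta is a strong \emph{global} statement about the whole component (every unbalanced cycle must pass through the two branch vertices $x,y$, and the balanced cycles must be exactly those lying inside a single part $H_i$); your sketch restates this dichotomy (``when these three paths fatten to the exact incidence pattern \dots\ otherwise \dots\ expose the non-signed four-bundle'') without giving an argument for it. In the signed case there is a concrete trap you flag but do not defuse: contracting two vertex-disjoint unbalanced cycles $C_1,C_2$ together with two disjoint connecting paths does \emph{not} automatically yield the signed $U_{2,4}$, because the resulting digon may be balanced, producing parallel elements instead of a $4$-point line. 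One must retain the edge of $C_1$ lying in the correct arc between the two attachment vertices (the two arcs have opposite relative sign since $C_1$ is unbalanced, so exactly one choice makes the digon unbalanced); without this observation the claimed minor need not appear. Also, your inventory of biased graphs representing $U_{2,4}$ is incomplete --- one unbalanced loop together with three parallel links forming pairwise unbalanced digons is a third, non-signed, type --- which matters if necessity is to be proved by hunting for exactly these minors.

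The sufficiency direction has gaps as well. The symmetric-difference verification for forms $(3)$ and $(4)$ is asserted, not carried out, and your reduction for form $(3)$ is incorrect as stated: circuits there are not confined ``to a controlled neighbourhood of a single unbalanced block,'' since a loose handcuff consists of unbalanced cycles lying in two \emph{different} unbalanced blocks together with the path joining them, which may cross many blocks. A cleaner and complete route exists, which the paper itself points at in Section~\ref{sec:form_Zas_4}: in form $(4)$ there are no loose handcuffs at all, so $F(\Omega)=L(\Omega)$, and the lift matroid of a signed graph is always binary (represent it over $\mathrm{GF}(2)$ by the vertex--edge incidence matrix augmented with the row of edge signs); form $(3)$ can be handled by the curling construction of Lemma~\ref{Zasvaslky-(4)-1}, whose proof is independent of the present theorem and exhibits such an $F(\Omega)$ as a graphic, hence binary, matroid. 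With those replacements sufficiency becomes routine; the necessity direction, however, still requires the substantial structural analysis described above, so as it stands the proposal does not constitute a proof.
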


Therefore, any biased graph $\Omega$ with graphic frame matroid has
one of the forms (1)-(4) of Theorem~\ref{Zaslavsky}. Evidently, when
$\Omega$ is balanced, by Whitney\rq{}s 2-Isomorphism Theorem
$\Gamma$ is 2-isomorphic to $G$. That is, when $\Omega$ has the form
in Theorem~\ref{Zaslavsky}(1), $\Omega$ is balanced with underlying
graph $\Gamma$ 2-isomorphic to $G$. Next we consider a biased graph
$\Omega$ that has one of forms (2)-(4) of Theorem~\ref{Zaslavsky}.

First we consider an $\Omega$ that has form
Theorem~\ref{Zaslavsky}(2). Assume that $\Omega$ is a fat theta
obtained from balanced graphs $\Omega_1,\Omega_2,\Omega_3$ by
identifying $u_1,u_2,u_3$ to a vertex $u$ and $v_1,v_2,v_3$ to a
vertex $v$, where $u_i,v_i\in V(\Gamma_i)$ (where $\Gamma_i$ is the underlying graph of $\Omega_i$).
Let $H$ be the graph
obtained from $\Gamma_1, \Gamma_2, \Gamma_3$ by
identifying $u_i$ with $v_{i+1}$ for any $i\in[3]$, where the
subscripts are modulo 3. Evidently, $M(H)=F(\Omega)$; and
consequently, by Whitney's 2-Isomorphism Theorem $H$ is 2-isomorphic
to $G$ as $F(\Omega)=M(G)$ implying $M(H)=M(G)$. So we only need to
consider $\Omega$ with forms (3) and (4) of Theorem \ref{Zaslavsky}.
These cases will be discussed in Sections~\ref{sec:form_Zas_3}
and~\ref{sec:form_Zas_4} respectively. We end this section with two
results that will be used without reference sometimes. The first one
appears in~\cite{Zaslavsky81}.

\begin{lemma}[\cite{Zaslavsky81}, Theorem $6$]\label{notheta}
A biased graph is a signed graph if and only if it has no
contra-balanced theta subgraphs.
\end{lemma}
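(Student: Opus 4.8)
The forward direction is a short parity computation. Suppose $\Omega$ is a signed graph with signature $\sigma\colon E \to \mathbb{F}_2$ (writing the label group additively, so a cycle is balanced precisely when its edges sum to $0$). Let $\Theta$ be any theta subgraph, with internally disjoint paths $P_1,P_2,P_3$ joining its two branch vertices, and set $\epsilon_i = \sum_{e\in P_i}\sigma(e)$. The three cycles of $\Theta$ are the $P_i\cup P_j$, and $P_i\cup P_j$ is unbalanced exactly when $\epsilon_i+\epsilon_j=1$. Since $(\epsilon_1+\epsilon_2)+(\epsilon_2+\epsilon_3)+(\epsilon_1+\epsilon_3)=0$, an even number of the three cycles are unbalanced; in particular not all three are, so $\Theta$ is not contra-balanced. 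Hence a signed graph has no contra-balanced theta subgraph.

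For the converse, assume $\Omega$ has no contra-balanced theta. The consequence that drives everything: since $\mathcal{B}$ obeys the theta property, no theta has exactly one unbalanced cycle, and by hypothesis none has three, so \emph{every} theta subgraph has either $0$ or $2$ unbalanced cycles. Balance, thetas, and signatures are each confined to a single component, so I may assume $\Omega$ is connected. The plan is to read off a signature from a spanning tree. Fix a spanning tree $T$ of $\Gamma$ and, writing $b(C)\in\mathbb{F}_2$ for the value $0$ or $1$ according as the cycle $C$ is balanced or unbalanced, define $\sigma(e)=0$ for $e\in T$ and $\sigma(e)=b(C_e)$ for $e\notin T$, where $C_e$ is the fundamental cycle of $e$. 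By construction $\sigma$ correctly predicts the balance of every fundamental cycle; the entire task is to show it predicts the balance of every cycle, i.e.\ that $b(C)=\sum_{e\in E(C)\setminus T}\sigma(e)$ for all cycles $C$.

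Working in the cycle space $\mathcal{C}(\Gamma)$ over $\mathbb{F}_2$, the fundamental cycles $\{C_e : e\notin T\}$ form a basis, and the displayed identity says exactly that the unbalance function $b$ agrees on every cycle with the linear functional $\langle\sigma,-\rangle$. By the usual duality this is equivalent to $b$ respecting every dependency: $\sum_i b(C_i)=0$ whenever cycles $C_1,\dots,C_k$ satisfy $C_1+\cdots+C_k=0$ in $\mathcal{C}(\Gamma)$. The basic instance of such a dependency is a theta, whose three cycles satisfy $C_1+C_2+C_3=0$; for these the ``$0$ or $2$ unbalanced'' conclusion above gives precisely $b(C_1)+b(C_2)+b(C_3)=0$. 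So it remains to reduce an arbitrary dependency to theta dependencies, which I would do by induction on $|E(C)\setminus T|$ to prove $b(C)=\sum_{e}b(C_e)$: given a cycle $C$ with at least two non-tree edges, choose a non-tree edge $e=uv$ and compare $C$ with $C_e=e\cup T[u,v]$, which shares the edge $e$ with $C$. When $C$ and $C_e$ meet in exactly a path they form a theta, the theta relation applies, and the symmetric difference $C\,\triangle\,C_e$ is a single cycle with one fewer non-tree edge, closing the induction.

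The main obstacle is exactly that $C$ and $C_e$ need not meet in a single path: the tree path $T[u,v]$ can re-enter $C$, so $C\,\triangle\,C_e$ may be a disjoint union of several cycles rather than one, and then the two-cycle theta relation does not apply verbatim. The crux of the proof is to absorb this. I would handle it by a secondary induction on the size of the overlap between the two cycles under comparison, repeatedly using the observation that any two cycles meeting in a nontrivial path form a theta: one replaces such a pair by the third cycle of their theta, strictly reducing the overlap (or the number of non-tree edges) while preserving the $\mathbb{F}_2$-sum, until the dependency is visibly a sum of theta relations. Phrased invariantly, the content is that the theta dependencies generate the whole kernel of the map $\mathbb{F}_2^{\{\text{cycles}\}}\to\mathcal{C}(\Gamma)$; establishing this generation statement is the only nonroutine part, and it is precisely where the hypothesis of no contra-balanced theta enters, since that hypothesis is what upgrades each theta relation from a statement about four possible parities to the single usable identity $b(C_1)+b(C_2)+b(C_3)=0$.
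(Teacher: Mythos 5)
The paper itself offers no proof of this lemma --- it is quoted verbatim from Zaslavsky \cite{Zaslavsky81} --- so your proposal has to stand on its own. Your forward direction is complete and correct. Your converse has the right skeleton: reduce to a connected graph, define $\sigma$ from a spanning tree $T$ by $\sigma(e)=0$ on $T$ and $\sigma(e)=b(C_e)$ off $T$, and reduce everything to the identity $b(C)=\sum_{e\in E(C)\setminus T}b(C_e)$ for every cycle $C$; and the claim you need (equivalently, that theta relations span the kernel of $\mathbb{F}_2^{\{\text{cycles}\}}\to\mathcal{C}(\Gamma)$) is in fact true.

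But there is a genuine gap: the step you yourself call ``the only nonroutine part'' is never carried out, and the sketch you offer for it fails in exactly the case it is meant to handle. Your induction compares $C$ with its fundamental cycle $C_e$, and the problematic case is when the tree path $T[u,v]$ re-enters $C$. In that case $C$ and $C_e$ do \emph{not} meet in a path, so they are not two cycles of any theta, and your remedy --- ``replace such a pair by the third cycle of their theta'' --- has nothing to act on: there is no third cycle, no specified pair to which the observation applies, no defined overlap measure, and no argument that anything decreases. A correct repair abandons the comparison with $C_e$ altogether. Since $C$ has at least two non-tree edges, $T[u,v]$ cannot have all its edges in $C$ (that would force $T[u,v]=C-e$, which contains a non-tree edge); take an edge of $T[u,v]$ outside $C$ and extend it along $T[u,v]$ in both directions until first meeting $V(C)$, obtaining a subpath $Q\subseteq T$ with at least one edge, both ends on $C$, internally disjoint and edge-disjoint from $C$. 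Now $C\cup Q$ \emph{is} a theta, whose other two cycles are $Q\cup A_1$ and $Q\cup A_2$ with $A_1,A_2$ the two arcs of $C$ between the ends of $Q$. Each $A_i$ must contain a non-tree edge (otherwise $Q\cup A_i$ would be a cycle inside $T$), so both of these cycles have strictly fewer non-tree edges than $C$, the theta relation gives $b(C)=b(Q\cup A_1)+b(Q\cup A_2)$, and the induction closes because the non-tree edges of $C$ are partitioned between $A_1$ and $A_2$. Without this (or an equivalent) argument, your converse is a plan rather than a proof.
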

\noindent The last result of this section is an immediate
consequence of Theorem~\ref{Zaslavsky} and Lemma~\ref{notheta}.

\begin{corollary}\label{type-2-con}
Let $\Omega$ be an unbalanced signed graph such that $F(\Omega)$ is
a connected binary matroid. Then $\Omega$ has no balanced loops and
at least one of the following holds.
\begin{itemize}
    \item[$(1)$] $\Omega$ consists of one unbalanced block.
    \item[$(2)$] $\Omega$ has more than one unbalanced blocks and a block is
unbalanced if and only if it is an end-block. Moreover, when
$F(\Omega)$ is $3$-connected, each unbalanced block is an unbalanced
loop.
\end{itemize}
\end{corollary}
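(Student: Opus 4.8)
The plan is to read the conclusion directly off Zaslavsky's classification (Theorem~\ref{Zaslavsky}), using Lemma~\ref{notheta} to discard the fat-theta case and exploiting connectedness (and, at the very end, $3$-connectedness) of $F(\Omega)$ to recover the block structure. First I would reduce to the case that $\Omega$ is connected: every circuit of a frame matroid is a connected subgraph (a balanced cycle, a tight or loose handcuff, or a contra-balanced theta), so it lies in a single connected component of $\Omega$, whence $F(\Omega)$ is the direct sum of the frame matroids of the components; as $F(\Omega)$ is connected, only one component carries edges, and since $\Omega$ is unbalanced that component is unbalanced. That $\Omega$ has no balanced loop is then immediate, for a balanced loop is a loop of $F(\Omega)$, while the connected matroid $F(\Omega)$ — having at least two elements, namely such a loop together with an edge of an unbalanced cycle — has none.

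Next I would apply Theorem~\ref{Zaslavsky} to the connected, unbalanced signed graph $\Omega$. Form (1) is excluded because $\Omega$ is unbalanced, and form (2) because a fat theta contains a contra-balanced theta (three internally disjoint paths joining the two branch vertices, one through each $H_i$, form a theta all of whose cycles meet two of the $H_i$ and so are unbalanced), whence by Lemma~\ref{notheta} a signed graph cannot be a fat theta. So $\Omega$ has form (3) or (4). The key tool for the block statements is the following consequence of connectedness: if some cut-vertex of $\Omega$ has a nonempty balanced branch $Y_0$, then the standard rank formula for frame matroids gives $r(Y_0)+r(E\del Y_0)-r(F(\Omega))=0$, a $1$-separation contradicting connectedness. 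In particular every end-block is unbalanced, so in form (4) a second block would produce a second (hence unbalanced) end-block; thus $\Omega$ is a single unbalanced block, which is conclusion (1).

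For form (3) — more than one unbalanced block — I would verify conclusion (2). That a block is unbalanced if and only if it is an end-block follows from the tool above in both directions: end-blocks are unbalanced by the argument just given, and if an unbalanced block $B_i$ had a cut-vertex $v'$ other than its blocking vertex $v_i$, then since $v_i$ separates $B_i$ from every other unbalanced block, the branch at $v'$ away from $B_i$ would contain no unbalanced block, hence be a nonempty balanced branch, giving the same contradiction; so $B_i$'s only cut-vertex is $v_i$ and $B_i$ is an end-block.

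Finally, assuming $F(\Omega)$ is $3$-connected, I would show each unbalanced block is an unbalanced loop, and this is where I expect the main work. For an unbalanced block $B_i$ with blocking vertex $v_i$ the rank formula gives $r(E(B_i))+r(E\del E(B_i))-r(F(\Omega))=1$; since $F(\Omega)$ has no $2$-separation, one of the two sides has at most one element. When there are at least three unbalanced blocks the complement of $E(B_i)$ contains at least two edges, forcing $|E(B_i)|\le 1$, so $B_i$ is an unbalanced loop. The delicate case is exactly two unbalanced blocks meeting at their common cut-vertex, where this crude partition constrains only one of them; there, assuming one block is large, I would instead locate a $2$-separation internal to that block (grouping the unbalanced loop with a suitable part of the large block so that the rank defect is $1$), again contradicting $3$-connectedness. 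Producing this finer separation in general is the principal technical obstacle — and I would note that the only $3$-connected configuration surviving the argument is the small matroid $U_{2,3}=M(K_3)$, a digon plus a loop, which is excluded in the applications by the hypothesis of at least four vertices.
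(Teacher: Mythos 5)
The paper offers no proof of this corollary at all --- it is asserted to be ``an immediate consequence of Theorem~\ref{Zaslavsky} and Lemma~\ref{notheta}'' --- so your attempt necessarily does more than the paper does, and most of it is sound and worth having: the reduction to connected $\Omega$, the exclusion of balanced loops, the rank computation showing that a balanced branch at a cut-vertex yields a $1$-separation of $F(\Omega)$, and the derivation from it of ``unbalanced $\Leftrightarrow$ end-block'' in form (3) and of conclusion (1) in form (4) are all correct. (One minor patch: a fat theta contains a contra-balanced theta only when all three parts $H_i$ contain an $x_i$--$y_i$ path; in the degenerate case $\Omega$ can be a signed graph, and you need your balanced-branch tool again to dispose of it.)

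The gap you acknowledge in the ``Moreover'' clause is genuine, but your plan for closing it cannot succeed, because far more than $U_{2,3}$ survives. Let $\Omega$ be the pinch of $K_4$ along an edge: a block $B_1$ on vertices $u,3,4$ consisting of two unbalanced digons $u3$, $u4$ and the edge $34$, together with an unbalanced loop at $u$, where $B_1\del u$ is balanced. By the paper's own fourth family, $F(\Omega)=M(K_4)$, which is $3$-connected, connected, and binary. Under the block convention that makes Theorem~\ref{Zaslavsky} true --- a loop is a block of its own, which is exactly what puts curlings into form (3) --- this $\Omega$ has two unbalanced blocks, both end-blocks, yet $B_1$ is not a loop. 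The same holds for the pinch of any simple $3$-connected graph along an edge, an infinite family, and $K_4$ has four vertices, so these examples are not removed by the ``at least four vertices'' hypothesis you invoke. Consequently there is no ``finer $2$-separation'' to locate: the matroids in question are genuinely $3$-connected, and the clause you are trying to prove is false under that convention. It becomes true only under the paper's literal definition of block (maximal connected subgraph with no cut-vertex in the vertex-deletion sense), which absorbs such a loop into $B_1$ and sends these pinches to conclusion (1); but that same convention makes a simple curling of a $3$-connected graph a single block containing two vertex-disjoint unbalanced cycles, contradicting Theorem~\ref{Zaslavsky} as stated. So the real obstruction is not a missing separation argument but the convention governing blocks and loops: any correct proof must fix that convention and then either absorb or explicitly except the pinch configuration (one non-loop unbalanced block plus an unbalanced loop at its blocking vertex), rather than search for $2$-separations that do not exist.
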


\subsection{$\Omega$ with form Theorem~\ref{Zaslavsky}(3)}
\label{sec:form_Zas_3}

In this section, we mainly characterize those signed graphs $\Omega$
representing the $2$-connected graph $G$ with form Theorem
\ref{Zaslavsky}(3), that is, $\Omega$ is a signed graph with more
than one unbalanced block, and each unbalanced block $B_i$ has a
vertex $v_i$ such that $B_i\del v_i$ is balanced and $v_i$ is a
cut-vertex separating $B_i$ from all other unbalanced blocks. It
follows from Corollary~\ref{type-2-con} that a block is unbalanced
if and only if it is an end-block.

First we show that when $\Omega$ is a curling, $F(\Omega)$ is
graphic.

\begin{lemma}\label{Zasvaslky-(4)-2}
Let $\Omega$ be a curling of $H$ defined as
Section~\ref{sec:SixFamilies}. Then $M(H)=F(\Omega)$.
\end{lemma}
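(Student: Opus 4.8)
The plan is to fix the natural bijection between $E(H)$ and $E(\Gamma)$ under which the curling leaves every edge in place, only moving the $v$-endpoints of the edges at $v$ to the corresponding $v_i$ and recording a sign. Under this bijection I will show that a set $S\subseteq E$ is a cycle of $H$ if and only if $S$ is a circuit of $F(\Omega)$; since a matroid is determined by its circuits, this gives $M(H)=F(\Omega)$. Because $\Omega$ is a signed graph it has no contra-balanced theta subgraph (Lemma~\ref{notheta}), so by Zaslavsky's circuit description the circuits of $F(\Omega)$ are exactly the balanced cycles and the (tight or loose) handcuffs whose two cycles are both unbalanced. The underlying graph $\Gamma$ is obtained from $H$ by ``splitting'' $v$: the copy of $v$ lying in $H_i$ is merged into $v_i$, and the edges formerly at $v$ become the negative edges, each $H_i$'s negative edges being incident with its vertex $v_i$.

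The engine of the proof is the separation hypothesis $V(H_i)\cap V(H|(E(H)-E(H_i)))=\{v,v_i\}$. First I would record its consequence for $\Gamma$: deleting $v$ leaves $v_i$ as a cut-vertex separating the edges of $H_i$ from the rest of $\Gamma$, and every negative edge of $H_i$ is incident with $v_i$. From this two facts follow that I will use repeatedly. \emph{(i)} Any cycle of $\Gamma$ using an edge of $H_i$ lies within $H_i$, so no cycle uses negative edges from two different $H_i$; and since all negative edges of a given $H_i$ share the endpoint $v_i$, a cycle uses at most two of them. Consequently a balanced cycle has either $0$ or $2$ negative edges, an unbalanced cycle has exactly one negative edge, and any unbalanced cycle passes through the vertex $v_i$ of its $H_i$. \emph{(ii)} A path joining two distinct $H_i, H_j$ must pass through both cut-vertices $v_i$ and $v_j$.

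For the forward direction, let $C$ be a cycle of $H$. If $C$ avoids $v$ then all its edges are positive and unchanged, so $C$ is a balanced cycle of $\Gamma$. Otherwise $C$ uses exactly two edges $e_1=a_1v$ and $e_2=a_2v$ at $v$, with $a_1\in H_i$, $a_2\in H_j$, and the remainder of $C$ a $v$-avoiding path $P$ from $a_1$ to $a_2$. When $i=j$, fact~\emph{(i)} forces $P$ into $H_i$, and either $e_1Pe_2$ closes to a balanced cycle through $v_i$ (two negative edges), or $P$ passes through $v_i$ and $C$ becomes a tight handcuff of two unbalanced cycles meeting at $v_i$. When $i\neq j$, fact~\emph{(ii)} forces $P$ through $v_i$ and $v_j$, so the rerouted $e_1$ together with $P[a_1,v_i]$, and the rerouted $e_2$ together with $P[v_j,a_2]$, are two unbalanced cycles joined by $P[v_i,v_j]$, a loose handcuff. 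In every case $C$ is a circuit of $F(\Omega)$.

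For the converse I would reverse the rerouting, moving each negative edge's $v_i$-endpoint back to $v$, and check that each type of circuit becomes a cycle of $H$. A balanced cycle with no negative edges is already a cycle of $H-v$; one with two negative edges lies in a single $H_i$ with both negatives at $v_i$, and un-rerouting turns it into a cycle through $v$. For a tight handcuff, fact~\emph{(i)} puts both unbalanced cycles in one $H_i$ and through $v_i$, so their shared vertex is $v_i$; un-rerouting produces two internally disjoint $v$-$v_i$ paths, i.e.\ a cycle of $H$. For a loose handcuff, fact~\emph{(i)} puts the two cycles in distinct $H_i, H_{i'}$ (so $i\neq i'$) through $v_i,v_{i'}$, and fact~\emph{(ii)} forces the connecting path to meet them exactly at $v_i$ and $v_{i'}$; un-rerouting then yields a single cycle $v\text{--}v_i\text{--}v_{i'}\text{--}v$ of $H$. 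I expect the main obstacle to be exactly this structural bookkeeping: naively relabeling endpoints turns a cycle through $v$ into a path, and it is only the cut-vertex localization of facts~\emph{(i)} and~\emph{(ii)} that recovers the missing adjacencies and makes the two circuit families coincide.
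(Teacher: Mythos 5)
Your proof is correct and takes essentially the same approach as the paper's: a circuit-by-circuit comparison in both directions, using the fact that $\Omega$ is a signed graph (hence has no contra-balanced theta) to reduce the circuits of $F(\Omega)$ to balanced cycles and contra-balanced handcuffs, which are then matched with cycles of $H$ via the rerouting at $v$. If anything, your case analysis is more careful than the paper's, which asserts that a cycle of $H$ through $v$ whose two edges at $v$ lie in the same $H_i$ becomes a balanced cycle of $\Omega$, overlooking the tight-handcuff subcase (when the connecting path passes through $v_i$) that you handle explicitly; the paper's converse direction is also left as ``easy to verify,'' which your fact~\emph{(i)}/\emph{(ii)} bookkeeping fills in.
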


\begin{proof}
Let $C$ be an arbitrary cycle of $H$. When $v\notin C$, the set $C$
is also a balanced cycle of $H$. So we may assume $v\in C$ and
$e_1=vu_1, e_2=vu_2\in C$. When $u_1, u_2$ are in the same $H_i$,
$C$ is also a balanced cycle of $\Omega$; otherwise, $C$ is a
contra-balanced handcuff of $\Omega$. Therefore, every circuit of
$M(H)$ is a circuit of $F(\Omega)$.

On the other hand, let $C$ be an arbitrary circuit of $F(\Omega)$.
Evidently, $C$ is a balanced cycle or a contra-balanced handcuff of
$\Omega$ as $\Omega$ is a signed graph with no contra-balanced theta
subgraph. In either case, by the definition of $\Omega$, it is easy
to verify that $C$ is a cycle of $H$. Hence, every circuit of
$F(\Omega)$ is a circuit of $M(H)$.
\end{proof}

Secondly, we show that when $\Omega$ is a biased graph representing
$M(G)$ with more than one unbalanced block, there is a graph $H$
2-isomorphic to $G$ such that $\Omega$ is obtained as a curling of
$H$. To prove this we need some definitions and results first.

Assume that $\Omega$ is a signed graph, and $(V_1,V_2)$ is an
arbitrary partition of $V$. Let $\delta=(V_1,1;V_2,-1)$ be a
labeling of $V$ such that any vertex in $V_1$ is labelled by 1 and
any vertex in $V_2$ labelled by $-1$. Then $\delta(\Omega)$ is a
\emph{switching} of $\Omega$ with any edge relabelled by the product
of its end-vertices' labeling and its original labeling in $\Omega$.
Evidently, $F(\Omega)=F(\delta(\Omega))$.

\begin{lemma}\label{sign-bal}
Let $\Omega$ be a balanced signed graph. Then by switching all edges
of $\Omega$ can be labelled by $1$.
\end{lemma}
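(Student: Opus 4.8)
The plan is to exhibit an explicit switching that makes a spanning tree positive and then argue that balance forces every remaining edge to become positive as well. First I would reduce to the case that the underlying graph $\Gamma$ of $\Omega$ is connected: a switching is specified by a vertex-partition $(V_1,V_2)$, and a vertex-labelling chosen independently on each connected component assembles into a single global switching, so it suffices to treat each component separately. Writing $\sigma$ for the given $\{1,-1\}$-labelling of $\Omega$, the one fact I would record at the outset is that switching preserves the sign-product of every cycle. Indeed, for a cycle $C=v_1v_2\cdots v_kv_1$ the product of the switched labels is $\prod_{i}\delta(v_i)\delta(v_{i+1})\sigma(v_iv_{i+1})$ (indices modulo $k$); since each vertex of $C$ is incident with exactly two edges of $C$, each factor $\delta(v_i)$ occurs exactly twice, so the $\delta$-contributions cancel and this product equals $\prod_i\sigma(v_iv_{i+1})$. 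In particular a cycle is balanced in $\delta(\Omega)$ if and only if it is balanced in $\Omega$, so the hypothesis that $\Omega$ is balanced is inherited by every switching of $\Omega$.

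Next, assuming $\Gamma$ connected, I would fix a spanning tree $T$ rooted at a vertex $r$ and define a labelling $\delta$ of $V$ recursively down the tree: set $\delta(r)=1$, and for each tree edge $e=ab$ with $a$ the parent of $b$ set $\delta(b)=\delta(a)\sigma(e)$. Taking $V_1$ to be the vertices labelled $1$ and $V_2$ the vertices labelled $-1$ yields the switching $\delta(\Omega)$. By construction each tree edge $e=ab$ receives switched label $\delta(a)\delta(b)\sigma(e)=\delta(a)\cdot(\delta(a)\sigma(e))\cdot\sigma(e)=\delta(a)^2\sigma(e)^2=1$, so every edge of $T$ is positive in $\delta(\Omega)$.

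Finally I would show every non-tree edge is positive in $\delta(\Omega)$. Let $f$ be an edge of $\Gamma$ not in $T$; together with the path of $T$ joining its endpoints it lies on a unique cycle $C_f$. Since $\Omega$ is balanced, so is $\delta(\Omega)$ by the invariant above, whence $C_f$ has an even number of negative edges in $\delta(\Omega)$. But every tree edge of $C_f$ is positive in $\delta(\Omega)$, so $f$ is the only edge of $C_f$ that could be negative; an even count then forces $f$ to be positive. As $f$ was arbitrary, every edge of $\delta(\Omega)$ is labelled $1$, as required. The only point needing genuine care is the cycle-sign invariance under switching; once that is in hand the remainder is a direct fundamental-cycle count, so I do not anticipate any substantial obstacle.
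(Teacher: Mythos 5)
Your proposal is correct and follows essentially the same route as the paper: reduce to the connected case, switch so that a spanning tree becomes all-positive, and then use balance of each fundamental cycle to force every non-tree edge to be positive. The only difference is that you spell out the details the paper leaves implicit (the explicit root-to-leaf construction of the switching and the invariance of cycle signs under switching), which is a matter of exposition rather than of approach.
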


\begin{proof}
It suffices to show that the result holds when $\Gamma$ is
connected. Let $T$ be a spanning tree of $\Gamma$. Then for some
switching $\delta(\Omega)$, every edge of $T$ is labelled by $1$.
For every edge $e$ not in $T$, the unique cycle in $T \cup \{e\}$ is
balanced, thus $e$ is also labelled with $1$ in $\delta(\Omega)$. It
follows that all edges in $\delta(\Omega)$ are labelled by $1$.
\end{proof}

\begin{lemma}\label{a-bal-vertex-case}
Let $G$ be a $2$-connected graph and $\Omega$ be an unbalanced
signed graph with a blocking vertex $v$ and satisfying
$F(\Omega)=M(G)$. Then there is a graph $H$ 2-isomorphic to $G$ such
that $\Omega$ is obtained from $H$ by a pinch.
\end{lemma}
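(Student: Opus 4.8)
The plan is to invert the pinch operation. Since $v$ is a blocking vertex, $\Omega\del v$ is balanced, so by Lemma~\ref{sign-bal} I may switch $\Omega$---which preserves the biased graph, and hence $F(\Omega)$---so that every edge not incident with $v$ is labelled $+1$. By Corollary~\ref{type-2-con} (applied to the connected binary matroid $M(G)$) the signed graph $\Omega$ has no balanced loops; after the normalization this means every loop of $\Omega$ sits at $v$ and is unbalanced. Moreover, since switching at $v$ alone would convert an all-positive signature into a balanced graph, and $\Omega$ is unbalanced, at least one edge at $v$ must carry the label $-1$.

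I then construct $H$ by splitting $v$ into two vertices $v_1,v_2$: each edge at $v$ labelled $-1$ is reattached to $v_1$, each (non-loop) edge labelled $+1$ is reattached to $v_2$, each unbalanced loop at $v$ becomes a $v_1v_2$ edge, and every other vertex and edge of $\Gamma$ is left unchanged. By construction $\Omega$ is precisely the pinch of $H$ on $\{v_1,v_2\}$. To see that $H$ has no isolated vertices, note that $v_1$ is incident with the guaranteed $-1$ edge, while $v_2$ is non-isolated because any non-loop unbalanced cycle through $v$ furnishes a $+1$ edge at $v$, and otherwise $\Omega$ contains an unbalanced loop at $v$, which supplies a $v_1v_2$ edge; the remaining vertices inherit their incidences from $\Gamma$.

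Next I verify $M(H)=F(\Omega)$ by matching circuits, after which Whitney's $2$-Isomorphism Theorem applied to $M(H)=M(G)$ yields that $H$ is $2$-isomorphic to $G$. The crucial point is that, because $v$ is blocking, every unbalanced cycle of $\Omega$ passes through $v$; since $\Omega$ is a signed graph it has no contra-balanced theta (Lemma~\ref{notheta}), and two unbalanced cycles cannot be vertex-disjoint as they share $v$, so $F(\Omega)$ has no contra-balanced loose handcuffs. Thus the circuits of $F(\Omega)$ are exactly the balanced cycles together with the contra-balanced tight handcuffs based at $v$. A balanced cycle uses an even number of $-1$ edges at $v$, hence either avoids $v$, or passes through $v$ using two $+1$ edges, or using two $-1$ edges; these become, respectively, a cycle of $H$ avoiding $\{v_1,v_2\}$, a cycle through $v_2$, or a cycle through $v_1$. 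A contra-balanced tight handcuff at $v$ is a pair of unbalanced cycles meeting only at $v$, each using exactly one $-1$ and one $+1$ edge at $v$; splitting $v$ turns each lobe into a $v_1$--$v_2$ path, and the two internally disjoint lobes together form a single cycle of $H$ through both $v_1$ and $v_2$. Conversely every cycle of $H$ maps back to one of these circuits, giving $M(H)=F(\Omega)=M(G)$.

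The main obstacle is this circuit-matching step, in particular establishing the bijection between contra-balanced tight handcuffs of $\Omega$ and cycles of $H$ meeting both $v_1$ and $v_2$, and correctly bookkeeping loops (unbalanced loops at $v$ corresponding to $v_1v_2$ edges). It is the blocking vertex that makes this tractable: forcing every unbalanced cycle through $v$ simultaneously eliminates loose handcuffs and, combined with Lemma~\ref{notheta}, pins the circuit set of $F(\Omega)$ down to balanced cycles and tight handcuffs at $v$, which are exactly the two kinds of cycles the un-pinch must reproduce.
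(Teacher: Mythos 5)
Your proposal follows essentially the same route as the paper's proof: switch so that all $-1$ edges are incident with the blocking vertex $v$ (via Lemma~\ref{sign-bal}), split $v$ into $v_1,v_2$ according to edge signs (with unbalanced loops becoming $v_1v_2$ edges), verify $F(\Omega)=M(H)$, and invoke Whitney's $2$-Isomorphism Theorem. The only difference is that you spell out the circuit correspondence (balanced cycles and tight handcuffs at $v$) that the paper compresses into a single ``Evidently,'' and this verification is correct.
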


\begin{proof}
By Lemma~\ref{sign-bal}, it is easy to see that by some switching we
can assume that all edges of $\Omega$ labelled by $-1$ are incident
with $v$. Moreover, since $\Omega$ is unbalanced, some edges
incident with $v$ are labelled by 1 and some edges incident with $v$
are labelled by $-1$. Let $H$ be the graph obtained from $\Omega$ by
splitting $v$ into $v_1$ and $v_2$ such that any edge $e=vu$
labelled by $-1$ is changed to $e=v_1u$ and any edge $e=vu$ labelled
by 1 is changed to $e=v_2u$ and with all other edges not incident
with $v$ unchanged. Every unbalanced loop at $v$ becomes a $v_1v_2$
edge. Evidently, $F(\Omega)=M(H)$; and hence, $H$ 2-isomorphic to
$G$ as $F(\Omega)=M(G)$.
\end{proof}

A graph $H$ is a \emph{path graph} if $H$ is connected and its
blocks-cut-vertices graph is a path. The proof of Lemma
\ref{with-a-bal-vertex} is similar to the proof of Lemma
\ref{a-bal-vertex-case}.

\begin{lemma}\label{with-a-bal-vertex}
Let $\Omega$ be an unbalanced signed graph with a blocking vertex
$v$. Then there is a graph $H$ with $F(\Omega)=M(H)$ and such that
$\Gamma$ is obtained by identifying two vertices $v_1$ and $v_2$ of
$H$ to $v$. Moreover, if $\Gamma$ is $2$-connected and $H$ is a path
graph that is not $2$-connected, then each end-block contains
exactly one of $v_1$ and $v_2$.
\end{lemma}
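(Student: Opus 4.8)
The plan is to prove the first assertion exactly as in Lemma~\ref{a-bal-vertex-case}, and then to obtain the ``moreover'' clause from a direct analysis of the block structure of $H$. For existence: since $v$ is a blocking vertex, $\Omega\del v$ is balanced, so by Lemma~\ref{sign-bal} we may switch $\Omega$ so that every edge labelled $-1$ is incident with $v$. Let $H$ be obtained from $\Gamma$ by splitting $v$ into $v_1$ and $v_2$, routing each edge $e=vu$ labelled $-1$ to $v_1u$ and each edge $e=vu$ labelled $1$ to $v_2u$, and turning each unbalanced loop at $v$ into a $v_1v_2$ edge. As in Lemma~\ref{a-bal-vertex-case}, one checks $F(\Omega)=M(H)$, and by construction $\Gamma$ is recovered from $H$ by identifying $v_1$ and $v_2$ to $v$.

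For the moreover clause, assume $\Gamma$ is $2$-connected and $H$ is a path graph that is not $2$-connected; write its blocks as $B_1,\dots,B_m$ (with $m\ge 2$) in path order, with cut-vertices $c_1,\dots,c_{m-1}$ where $c_i\in V(B_i)\cap V(B_{i+1})$. The first key observation is that $\Gamma$ has no loop: a loop is a block on its own, so a $2$-connected graph on at least two vertices has none. Since each $v_1v_2$ edge of $H$ becomes a loop at $v$ in $\Gamma$, it follows that $H$ has no $v_1v_2$ edge.

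Next I would show $v_1$ and $v_2$ are not cut-vertices of $H$. Note $\Gamma\del v=H\del\{v_1,v_2\}$. If, say, $v_1=c_j$, then $c_j$ separates $H$ into $B_1\cup\cdots\cup B_j$ and $B_{j+1}\cup\cdots\cup B_m$; since the latter retains vertices other than $c_j$, $2$-connectivity of $\Gamma$ forces the former, after also deleting $v_2$, to be empty. This makes an end-block consist of only $v_1$ and $v_2$, producing a $v_1v_2$ edge and contradicting the previous paragraph. Hence $v_1,v_2$ are non-cut-vertices, each lying in a unique block, say $v_1\in B_a$ and $v_2\in B_b$. Because each $c_i$ is a cut-vertex of $H$ but not of the $2$-connected graph $\Gamma$, the identification of $v_1$ and $v_2$ must reconnect $H\del c_i$; hence $v_1$ and $v_2$ lie in distinct components of $H\del c_i$ for every $i$. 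Comparing $a,b$ against the cut $H\del c_i=(B_1\cup\cdots\cup B_i)\cup(B_{i+1}\cup\cdots\cup B_m)$ for each $i$ forces $\{a,b\}=\{1,m\}$: if $1<a\le b$, then $c_{a-1}$ puts $v_1$ on the right but leaves no room for $v_2$ on the left. Thus one of $v_1,v_2$ lies in $B_1$ and the other in $B_m$, and since neither is a cut-vertex, each avoids the $c_i$, so each end-block contains exactly one of $v_1,v_2$.

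The routine part is the existence of $H$, which duplicates Lemma~\ref{a-bal-vertex-case}. The crux is the moreover clause, and in particular the degenerate configurations in which $v_1$ or $v_2$ is a cut-vertex, or in which both sit in a common end-block; the device that rules these out is the observation that $2$-connectivity of $\Gamma$ forbids loops and hence forbids $v_1v_2$ edges in $H$. I expect the careful bookkeeping of which component of $H\del c_i$ contains $v_1$ versus $v_2$ to be the most delicate step.
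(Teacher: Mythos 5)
Your proposal is correct, and its existence part coincides exactly with the paper's intended argument: the paper gives no standalone proof of this lemma, stating only that it is ``similar to the proof of Lemma~\ref{a-bal-vertex-case}'', which is precisely the switching-and-splitting construction you reproduce. Your block-structure analysis of the ``moreover'' clause --- a $2$-connected $\Gamma$ has no loops, hence $H$ has no $v_1v_2$ edge; neither $v_1$ nor $v_2$ can be a cut-vertex of $H$ (else $H\del\{v_1,v_2\}=\Gamma\del v$ would be disconnected unless an end-block were a $v_1v_2$ edge); and every cut-vertex $c_i$ of the path graph $H$ must separate $v_1$ from $v_2$, forcing them into opposite end-blocks --- is also correct, and in fact supplies the verification that the paper leaves entirely to the reader.
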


For a path graph $H$, arbitrarily choose two vertices $v_1, v_2$
from its end-blocks such that when $H$ is not $2$-connected neither
$v_1$ nor $v_2$ is a cut-vertex of $H$ and they are not in the same
end-block. Add an edge $e$ connecting $v_1$ and $v_2$ to obtain a
new graph $H_1$. Let $H_1'$ be a graph 2-isomorphic to $H_1$ and
$v_1', v_2'$ be the end-vertices of $e$ in $H_1'$. Evidently, graph
$H'=H_1'-e$ is 2-isomorphic to $H$ and any $v_1v_2$-path in $H$ is
changed to a $v_1'v_2'$-path in $H'$ although the order of edges may
be different. In this case we say that $H'$ is a path graph
\emph{2-isomorphic to $H$ with $v_1v_2$-paths changed to
$v_1'v_2'$-paths}.

\begin{lemma}\label{Zasvaslky-(4)-1}
Let $\Omega$ be a signed graph with $\Gamma$ connected and such that
a block is unbalanced if and only if it is an end-block. Assume each
unbalanced block $B_i$ has a vertex $v_i$ such that $B_i\del v_i$ is
balanced and $v_i$ is a cut-vertex separating $B_i$ from all other
unbalanced blocks. Let $B_1,\cdots,B_k$ be all end-blocks of
$\Omega$ and for each $i\in[k]$ set
$E_i=E(B_i),\Gamma_i=\Gamma|E_i$. Then  the following hold.
\begin{itemize}
    \item[$(1)$] For some switch $\delta(\Omega)$, every edge labelled by $-1$ is in some $B_i$ and incident with $v_i$ for some $i\in[k]$.
    \item[$(2)$] For each $i\in[k]$, there is a path graph $H_i'$ such that $B_i$ is obtained from $H_i'$ by identifying
$v_{i1}'$ and $v_{i2}'$ to $v_i$ with all edges originally incident
with $v_{i1}'$ labelled by $-1$ and all other edges not incident
with $v_{i1}'$ labelled by $1$ and satisfying $F(B_i)=M(H_i')$.
    \item[$(3)$] For each $i\in[k]$, let $H_i$ be a path graph 2-isomorphic to
$H_i'$ with $v_{i1}'v_{i2}'$-paths changed to $v_{i1}v_{i2}$-paths.
First add a new isolated vertex $v$ to the graph
$\Gamma'=\Gamma\del(E_1\cup\cdots \cup E_k)$, and then add
$H_1,\cdots, H_k$ to $\Gamma'$ by identifying $v_{i2}$ with $v_i$
and $v_{11},\cdots, v_{k1}$ with $v$. Let $H$ denote the new graph.
Then $F(\Omega)=M(H)$.
\end{itemize}
\end{lemma}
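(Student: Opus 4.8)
The plan is to prove the three parts in the stated order, since each feeds the next. For part (1) I would use the block structure directly. Let $\Gamma^-$ be the subgraph of $\Gamma$ obtained by deleting, for every $i\in[k]$, all edges of $B_i$ incident with $v_i$. Every cycle of $\Gamma$ lies inside a single block, so any cycle surviving in $\Gamma^-$ lies either in an internal block (balanced, since only end-blocks are unbalanced) or in $B_i\del v_i$ (balanced by hypothesis). Hence each component of $\Gamma^-$ is balanced, and Lemma~\ref{sign-bal} provides a labelling of $V$ under which every edge of $\Gamma^-$ is $+1$. After the corresponding switch the only edges that can remain $-1$ are precisely the deleted ones, namely edges of some $B_i$ incident with $v_i$; this is exactly~(1).

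For part (2) I fix an end-block $B_i$. It is an unbalanced signed graph blocked by $v_i$, so Lemma~\ref{with-a-bal-vertex} yields a graph $H_i'$ with $F(B_i)=M(H_i')$ in which $B_i$ is recovered by identifying two vertices $v_{i1}',v_{i2}'$; by~(1) I may take the edges sent to $v_{i1}'$ to be exactly those labelled $-1$, matching the statement. To see that $H_i'$ can be taken to be a path graph, I would argue as follows: identifying $v_{i1}'$ and $v_{i2}'$ returns the $2$-connected graph $B_i$, so any cut-vertex $w\notin\{v_{i1}',v_{i2}'\}$ of $H_i'$ must separate $v_{i1}'$ from $v_{i2}'$ (otherwise $w$ would already separate $H_i'$, hence $B_i$, a contradiction). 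Thus every cut-vertex lies between the two split vertices, which forces the block--cut-vertex tree of $H_i'$ to be a path with $v_{i1}',v_{i2}'$ in its two end-blocks; the moreover clause of Lemma~\ref{with-a-bal-vertex} then records this end-block conclusion.

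For part (3) I form $H$ as prescribed and verify $F(\Omega)=M(H)$ by the circuit correspondence already carried out for a single curling in Lemma~\ref{Zasvaslky-(4)-2}. By Lemma~\ref{notheta} the circuits of $\Omega$ are its balanced cycles and its contra-balanced handcuffs. Under the identifications $v=v_{i1}$ and $v_i=v_{i2}$, a cycle of $H$ that avoids $v$, or that meets $v$ in two edges lying in the same $H_i$, corresponds to a balanced cycle of $\Omega$, while a cycle of $H$ meeting $v$ in edges of two different $H_i,H_j$ corresponds to a contra-balanced handcuff; conversely every circuit of $\Omega$ arises this way. Here I use that $M(H_i)=M(H_i')=F(B_i)$ (since $H_i$ is $2$-isomorphic to $H_i'$) together with the fact that the core $\Gamma'=\Gamma\del(E_1\cup\cdots\cup E_k)$ is balanced, and the ``$v_{i1}'v_{i2}'$-paths changed to $v_{i1}v_{i2}$-paths'' bookkeeping to keep the edge correspondence straight.

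I expect the main obstacle to be precisely this global matching in~(3). Everything upstream is comparatively routine given Lemmas~\ref{sign-bal} and~\ref{with-a-bal-vertex}: the switch of~(1) and the split-plus-path-graph structure of~(2) are local to a single block. The weight lies in the new feature that the hub $v$ is shared by \emph{all} the $H_i$, rather than sitting inside one curling: one must check that gluing every $H_i$ at the common $v$ produces exactly the cycles matching the contra-balanced handcuffs of $\Omega$ (which may span several end-blocks through $v$) and no spurious ones, and that the local $2$-isomorphisms $H_i'\to H_i$ patch consistently at the shared vertices $v$ and $v_i$. This is the step that genuinely generalises Lemma~\ref{Zasvaslky-(4)-2} and where I would spend the most care.
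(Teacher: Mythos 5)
Your overall route is the same as the paper's: part (1) via Lemma~\ref{sign-bal} applied after setting aside the edges of each $B_i$ incident with $v_i$, part (2) via Lemma~\ref{with-a-bal-vertex} together with an argument that $H_i'$ is a path graph, and part (3) via a direct circuit correspondence generalising Lemma~\ref{Zasvaslky-(4)-2}. Parts (1) and (2) are sound, and in fact more detailed than the paper, which declares both immediate; in (2) you should also rule out $v_{i1}'$ or $v_{i2}'$ themselves being cut-vertices of $H_i'$ (the same appeal to the 2-connectivity of $B_i$ does it), but that is cosmetic.

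The one genuine misstep is in your stated circuit correspondence in (3). A cycle of $H$ that meets $v$ in two edges of the same $H_i$ (hence lies inside $H_i$) need \emph{not} correspond to a balanced cycle of $\Omega$: if it also passes through $v_i = v_{i2}$, then under the identification its two $v_{i1}$--$v_{i2}$ paths each carry exactly one $(-1)$-edge (the edge at $v_{i1}$), so its image in $B_i$ is a pair of unbalanced cycles meeting only at $v_i$ --- a contra-balanced \emph{tight handcuff}, not a balanced cycle. For a concrete instance, let $B_i$ consist of two unbalanced triangles $v_iab$ and $v_icd$ together with the chord $ac$ (so $B_i$ is 2-connected and $v_i$ is blocking); the Hamilton cycle of $H_i$ through both $v_{i1}$ and $v_{i2}$ maps to exactly this tight handcuff. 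This does not sink the argument, because tight handcuffs are circuits of $F(\Omega)$ and the identity $M(H_i)=M(H_i')=F(B_i)$, which you invoke, already handles every cycle contained in a single $E_i$; but the dichotomy ``same $H_i$ $\Rightarrow$ balanced cycle, distinct $H_i,H_j$ $\Rightarrow$ handcuff'' is false as stated, and in the converse direction your claim that ``every circuit of $\Omega$ arises this way'' must explicitly cover contra-balanced tight handcuffs contained in a single end-block $B_i$, sending each to the corresponding cycle of $H_i$. (The paper's own converse has the same omission --- it only treats handcuffs meeting two distinct $E_i, E_j$ --- and the repair is the same one line in both cases.)
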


\begin{proof}
Evidently, (1) is an immediate consequence of Lemma~\ref{sign-bal}.
Moreover, since each $B_i$ is a signed graph with a blocking vertex
$v_i$, (2) follows immediately from Lemma \ref{with-a-bal-vertex}.
To show (3), let $C$ be an arbitrary cycle of $H$. When
$C\cap(E_1\cup\cdots\cup E_k)=\emptyset$, the set $C$ is also a
balanced cycle of $\Omega$. When $C\subseteq E_i$ for some
$i\in[k]$, since $H_i$ is 2-isomorphic to $H_i'$ and
$F(B_i)=M(H_i')$, the set $C$ is a circuit of $F(B_i)$. So we may
assume that $C\cap E_i, C\cap(E\del E_i)\neq\emptyset$ for some
$i\in[k]$. Evidently, there is only one integer $i\neq j\in[k]$ such
that $C\cap E_j\neq\emptyset$, and for any $s\in\{i,j\}$, the set
$C\cap E_s$ is a $v_{s1}v_{s2}$-path of $H_s$; and consequently,
$C\cap E_s$ is a $v_{s1}'v_{s2}'$-path of $H_s'$ as $H_s$ is a path
graph 2-isomorphic to $H_s'$ with $v_{s1}'v_{s2}'$-paths changed to
$v_{s1}v_{s2}$-paths. Thus, by (1) and (2) $C$ is a contra-balanced
handcuff of $\Omega$. So every circuit of $M(H)$ is a circuit of
$F(\Omega)$.

On the other hand, assume that $C$ is an arbitrary circuit of
$F(\Omega)$. Then $C$ is a balanced cycle or a contra-balanced
handcuff of $\Omega$ as $\Omega$ is a signed-graph. When $C$ is a
balanced cycle, no matter whether $C\subseteq E_i$ or
$C\cap(E_1\cup\cdots\cup E_k)=\emptyset$, by the definition of $H$
it is easy to see that the set $C$ is also a cycle of $H$ as $H_i$
is 2-isomorphic to $H_i'$. So we may assume that $C$ is a
contra-balanced handcuff of $\Omega$. Without loss of generality we
may assume $C\cap E_i, C\cap E_j\neq\emptyset$. Then for any
$s\in\{i,j\}$, the set $C\cap E_s$ is a $v_{s1}v_{s2}$-path of $H_s$
as $H_s$ is a path graph 2-isomorphic to $H_s'$ with
$v_{s1}'v_{s2}'$-paths changed to $v_{s1}v_{s2}$-paths. Therefore,
$C$ is also a cycle of $H$. So every circuit of $F(\Omega)$ is a
circuit of $M(H)$.
\end{proof}

Therefore, when $\Omega$ has form Theorem~\ref{Zaslavsky}(3), it
follows from Lemma~\ref{Zasvaslky-(4)-1} that $\Omega$ is obtained
as a curling. Moreover, when $G$ is $3$-connected, by
Corollary~\ref{type-2-con} each unbalanced block of $\Omega$ is a
loop. Thus, by Lemma~\ref{Zasvaslky-(4)-1} we have the following
result.

\begin{corollary}\label{Zalavsky-(4)-3-con}
Let $G$ be a $3$-connected graph and let $\Omega$ be an unbalanced
signed graph with $F(\Omega)=M(G)$. Assume that $\Omega$ has more
than one unbalanced block. Then $G$  is obtained from $\Gamma$ by
adding a new isolated vertex $v$ to $\Gamma$ and changing all loops
to links connecting $v$ and their original end-vertices.
\end{corollary}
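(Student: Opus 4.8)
The plan is to combine the two main ingredients already assembled in this section: the structural decomposition of Lemma~\ref{Zasvaslky-(4)-1}, which applies to any signed graph of form Theorem~\ref{Zaslavsky}(3), and the additional collapse of unbalanced blocks to loops that $3$-connectivity forces via Corollary~\ref{type-2-con}. Since we are in the setting $F(\Omega)=M(G)$ with $\Omega$ an unbalanced signed graph having more than one unbalanced block, Corollary~\ref{type-2-con}(2) tells us that a block is unbalanced if and only if it is an end-block, and moreover, because $M(G)$ is $3$-connected, \emph{each} unbalanced block is a single unbalanced loop. This is the crucial simplification: each $B_i$ consists of just one loop at some vertex $v_i$.

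First I would record that the hypotheses of Lemma~\ref{Zasvaslky-(4)-1} are met, so its conclusion gives a graph $H$ with $F(\Omega)=M(H)$, built by deleting the end-block edges $E_1\cup\cdots\cup E_k$ from $\Gamma$, adding an isolated vertex $v$, and re-attaching path graphs $H_i$ obtained from each $B_i$. Next I would specialize this construction to the present case. Each $B_i$ is an unbalanced loop $\ell_i$ at $v_i$ (after the switching of part (1) this loop is labelled $-1$). The path graph $H_i'$ produced by Lemma~\ref{Zasvaslky-(4)-1}(2) for such a single-loop block is just a single link joining $v_{i1}'$ and $v_{i2}'$ (splitting the unbalanced loop at $v_i$ yields exactly one edge between the two split vertices), and a $2$-isomorphic path graph $H_i$ is again a single link between $v_{i1}$ and $v_{i2}$. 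So the re-attachment in part~(3) amounts to taking this link, identifying one endpoint $v_{i2}$ with $v_i$ and the other endpoint $v_{i1}$ with the common new vertex $v$. In other words, each unbalanced loop $\ell_i$ at $v_i$ in $\Gamma$ is replaced by a link joining $v_i$ to $v$.

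Carrying this out over all $i$, the graph $H$ is precisely $\Gamma$ with the new isolated vertex $v$ adjoined and every unbalanced loop converted into a link from $v$ to the loop's original vertex $v_i$, exactly the statement of the corollary (I would identify $H$ with the desired $G$, using $F(\Omega)=M(H)=M(G)$ together with Whitney's $2$-Isomorphism Theorem to match them). The main thing to verify carefully is the claim that a single-loop end-block really does produce a single link under the machinery of Lemma~\ref{Zasvaslky-(4)-1}(2)--(3): I would check that splitting the blocking vertex of a lone unbalanced loop (as in the pinch/path-graph construction underlying Lemma~\ref{with-a-bal-vertex}) gives one link rather than anything more complicated, and that the $2$-isomorphism step does not alter this trivial path graph. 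This is the step I expect to be the only real obstacle, and it is essentially a bookkeeping check that the general decomposition degenerates correctly in the loop case; everything else is immediate from the cited results.
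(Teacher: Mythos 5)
Your proposal is correct and takes essentially the same route as the paper: the paper derives this corollary in exactly the same way, by combining Corollary~\ref{type-2-con} (which, given $3$-connectivity of $F(\Omega)=M(G)$, forces each unbalanced block to be a single unbalanced loop) with the construction of Lemma~\ref{Zasvaslky-(4)-1}(3), in which each path graph $H_i$ degenerates to a single link from $v_i$ to the new vertex $v$. Your extra bookkeeping (checking that a lone-loop end-block yields a single link unchanged by $2$-isomorphism, and identifying $H$ with $G$ via Whitney's theorem and $3$-connectedness) just makes explicit what the paper leaves implicit.
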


\subsection{$\Omega$ with form Theorem~\ref{Zaslavsky}(4)}
\label{sec:form_Zas_4}

In this section, we mainly characterize the signed graphs $\Omega$
representing the $2$-connected graph $G$ with form Theorem
\ref{Zaslavsky}(4). These have just one unbalanced block and no two
vertex-disjoint unbalanced cycles.

While Slilaty~\cite{MR2344133} characterized those signed graphs having no
blocking vertex and no two vertex disjoint unbalanced cycles having
graphic frame matroid in terms of projective-planar signed graphs
and 1, 2, and 3-sums of balanced signed graphs, an application of a
theorem on lift matroids gives us a different structural
characterization. The \emph{lift matroid} $L(\Omega)$ of a signed
graph $\Omega$ was defined by Zaslavsky in \cite{Zaslavsky-II}. Its
circuits are the sets of edges of one of the following two types:
balanced cycles and the union of two unbalanced cycles meeting in at
most one vertex. In his Ph.D. thesis Shih proved the following
characterisation of graphic lift matroids (see also \cite{Pivotto},
Theorem 4.1).

\begin{theorem}[Theorem 1, Chapter 2 in \cite{Shih}]\label{Shih-Pivotto}
Let $G$ be a graph and let $\Omega$ be a signed graph such that
$M(G)=L(\Omega)$. Then there exists a graph $H$ $2$-isomorphic to
$G$ such that one of the following holds.
\begin{itemize}
    \item[$(1)$]  $\Omega$ is obtained from $H$ by a pinch.
    \item[$(2)$]  $\Omega$ is obtained from $H$ by a 4-twisting.
    \item[$(3)$] $\Omega$ is obtained from $H$ by a consecutive twisting.
\end{itemize}
\end{theorem}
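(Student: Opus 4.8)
The plan is to reduce first to the case that $\Omega$ is connected and unbalanced and that $G$ is connected. Disconnected $\Omega$ is handled componentwise, and the block decomposition together with Whitney's $2$-Isomorphism Theorem lets me treat each block separately; a balanced $\Omega$ gives $L(\Omega)=M(\Gamma)$, so $H=\Gamma$ works immediately. Assuming these reductions, I would begin with the case that $\Omega$ has a \emph{blocking vertex} $v$. Every unbalanced cycle then meets $v$, so no two unbalanced cycles are vertex-disjoint; a short argument (the three first-edges of a theta at $v$ cannot be pairwise of opposite sign) also rules out contra-balanced thetas, whence the lift and frame matroids coincide: $L(\Omega)=F(\Omega)=M(G)$. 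Now Lemma~\ref{a-bal-vertex-case} applies verbatim and produces a graph $H$ $2$-isomorphic to $G$ realising $\Omega$ as a pinch, which is conclusion~(1).

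The engine for the remaining cases is the \emph{extended lift matroid} $L_0(\Omega)$ on ground set $E\cup\{e_0\}$, whose circuits are the balanced cycles, the unions of two unbalanced cycles meeting in at most one vertex, and the sets $C\cup\{e_0\}$ with $C$ an unbalanced cycle. It satisfies $L_0(\Omega)\del e_0=L(\Omega)=M(G)$ and $L_0(\Omega)/e_0=M(\Gamma)$, since contracting $e_0$ turns every unbalanced cycle into a circuit and leaves the balanced ones, recovering every cycle of $\Gamma$. Thus $e_0$ is a single-element extension of $M(G)$, governed by the modular cut $\mathcal{M}$ of those flats of $M(G)$ whose closure contains an unbalanced cycle. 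When some graph $2$-isomorphic to $G$ carries $\mathcal{M}$ as the connectivity of a fixed pair of vertices, the extension is graphic: writing $L_0(\Omega)=M(\tilde G)$ with $e_0=ab$, the graph $H:=\tilde G\del e_0$ is $2$-isomorphic to $G$, while $\Gamma=\tilde G/e_0$ is exactly $H$ with $a$ and $b$ identified and the edges on the $a$-side forming the negative class. This again exhibits the pinch, and in fact this matroidal condition corresponds to the existence of a blocking vertex, so the two descriptions of case~(1) agree.

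The heart of the theorem is the case in which no graph $2$-isomorphic to $G$ realises $\mathcal{M}$ as a single vertex pair, equivalently $\Omega$ has no blocking vertex and $L_0(\Omega)$ is \emph{non}-graphic. Here I would work with $L(\Omega)=M(G)$ directly and decompose $G$ along its $2$-separations, using Whitney's theorem to move between realisations; normalising signs on each piece by switching (Lemma~\ref{sign-bal}), I would track how the unbalance --- equivalently, where the endpoints of the virtual edge $e_0$ are forced to attach --- distributes among the pieces. The goal is to prove that consistency of the balanced-cycle class across the successive $2$-separations leaves only two possible global arrangements: either four pieces glued around a single identified triple of vertices $x,y,z$, giving a $4$-twisting, or a cyclic chain of $k$ pieces sharing one vertex $z$ with consecutive junction vertices, giving a consecutive twisting. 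The explicit negative-edge patterns (at $x_1,y_2,z_3$ in the first case and at $y_1,x_2$ in the second) then fall out of the sign normalisation, and the number of pieces and any parity condition emerge from propagating the signature once around the cyclic arrangement.

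I expect this last step to be the main obstacle: showing that \emph{only} the symmetric triple-junction and the cyclic chain survive, with no further exotic gluings, and that the signs close up consistently. This is essentially a connectivity-and-parity case analysis --- handling low-connectivity configurations, and verifying that any would-be fifth piece or non-cyclic attachment pattern either reintroduces a blocking vertex (returning to case~(1)) or violates the theta property or the circuit structure of $M(G)$ --- and it is where the bulk of the work lies. The easier converse direction, that each listed operation does yield $L(\Omega)=M(G)$, I would check by the same cycle-by-cycle bookkeeping already used for curlings in Lemma~\ref{Zasvaslky-(4)-1}.
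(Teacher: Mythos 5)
A point of orientation first: the paper does not prove this statement at all --- it is imported as an external result (Theorem 1, Chapter 2 of Shih's thesis; see also Theorem 4.1 of Pivotto's thesis), and the paper's contribution lies in how it is applied. So your proposal must be judged on whether it constitutes a proof on its own, and it does not. Your first two paragraphs are sound in outline: the reduction to connected unbalanced blocks; the observation that a blocking vertex forces $L(\Omega)=F(\Omega)$ (contra-balanced thetas cannot occur in any signed graph, by Lemma~\ref{notheta}, and any two unbalanced cycles share the blocking vertex); the appeal to Lemma~\ref{a-bal-vertex-case}, which is indeed proved in the paper independently of this theorem, so there is no circularity; and the extended lift matroid $L_0(\Omega)$ with $L_0(\Omega)\del e_0=L(\Omega)$ and $L_0(\Omega)/e_0=M(\Gamma)$, which is essentially the framework Shih and Pivotto use. (One inaccuracy even here: from $L_0(\Omega)=M(\tilde G)$ you may conclude only that $\tilde G/e_0$ is $2$-isomorphic to $\Gamma$, not equal to it, so the pinch case still needs an argument that $\Gamma$ itself arises by identifying two vertices of a graph $2$-isomorphic to $G$.)

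The genuine gap is your third paragraph, which is where the entire content of the theorem lives. In the no-blocking-vertex case you state the goal --- show that only the four-piece triple junction (4-twisting) and the cyclic chain (consecutive twisting) can occur --- and then acknowledge that establishing this ``is where the bulk of the work lies.'' That is a restatement of the theorem, not a proof step. Moreover, the plan as described is unlikely to succeed in that form: the 4-twisting is not detected by $2$-separations of $G$. In $H$ the four pieces meet at four vertices $w_1,\ldots,w_4$, each lying in three of the pieces, and in $\Gamma$ they meet in the triple $\{x,y,z\}$; these are $3$-separations, so a sign-tracking decomposition along $2$-separations will not by itself isolate this configuration. One needs an analysis of $3$-separations, or (as in Shih's and Pivotto's treatments) an argument about binary spaces and graphic single-element extensions and coextensions. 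As it stands, your proposal settles the easy (pinch) case and defers the hard cases, so it cannot be accepted as a proof of the statement.
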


Since $L(\Omega)=F(\Omega)$ when $\Omega$ has no vertex-disjoint
unbalanced cycles, the signed graph we want to find consisting of
one unbalanced block without vertex-disjoint unbalanced cycles has
the form of one of Theorem~\ref{Shih-Pivotto}(1)-(3). However, the
signed graph $\Omega$ in Theorem~\ref{Shih-Pivotto}(3) may have
vertex-disjoint unbalanced cycles, so we only need to find all
signed graphs having no vertex-disjoint unbalanced cycles.
Evidently, when $\Omega$ is obtained through~\ref{Shih-Pivotto}(1),
that is, obtained as a pinch, $\Omega$ has no two vertex-disjoint
unbalanced cycles. On the other hand, note that $\Omega$ has no
vertex-disjoint unbalanced cycles if and only if each cycle of $G$
is connected in $\Gamma$. Thus, we only need to determine under
which conditions a cycle of $H$ is connected in $\Gamma$, for the
graph $H$ (2-isomorphic to $G$) given in Theorem~\ref{Shih-Pivotto}.

\begin{lemma}\label{Case2}
Suppose that $\Omega$ is obtained from $H$ by a 4-twisting as in
Theorem~\ref{Shih-Pivotto}(2). Then every cycle of $H$ is connected
in $\Gamma$.
\end{lemma}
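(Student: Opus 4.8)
The plan is to exploit that $H$ and $\Gamma$ are built from the \emph{same} four graphs $H_1,H_2,H_3,H_4$ on a common edge set $E=E(H_1)\cup\cdots\cup E(H_4)$, and differ only in how the distinguished vertices are identified: in $H$ the vertices $x_i,y_{3-i},z_{i+2}$ are glued to $w_i$, whereas in $\Gamma$ all the $x_i$ become one vertex $x$, all the $y_i$ one vertex $y$, and all the $z_i$ one vertex $z$. Fixing a cycle $C$ of $H$ and regarding it as a subset of $E$, I want to show that this same subset induces a connected subgraph of $\Gamma$. Note first that this is a purely graph-theoretic claim about the underlying graph $\Gamma$; the balance/sign data of $\Omega$ plays no role.

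First I would decompose $C$ into \emph{arcs}. Since distinct $H_i$ share only the vertices $w_1,\dots,w_4$ in $H$, and $C$ is a simple cycle, $C$ breaks into maximal subpaths each lying in a single $H_i$, with endpoints among $\{w_1,\dots,w_4\}$; consecutive arcs lie in distinct $H_i$ and meet at a common $w_j$. If $C$ has no such transition it lies entirely in one $H_i$, and its image in $\Gamma$ is again a cycle (the three distinguished vertices of $H_i$ are merely relabelled $x,y,z$), so $C$ is connected; hence I may assume $C$ has at least two arcs. Because each $w_j$ that belongs to $H_i$ is exactly one of $x_i,y_i,z_i$, every arc endpoint, read inside its own $H_i$, is a distinguished vertex of that $H_i$.

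The key observation, and the place where the combinatorics of the $4$-twisting enter, is that the map carrying $H_i$ into $\Gamma$ sends $x_i,y_i,z_i$ to the three \emph{distinct} vertices $x,y,z$. Since $C$ is simple, the two endpoints of an arc are distinct $w$'s, hence distinct distinguished vertices of its $H_i$, hence distinct vertices of $\{x,y,z\}$ in $\Gamma$; so each arc becomes in $\Gamma$ a path joining two distinct vertices of $\{x,y,z\}$. Moreover, in $\Gamma$ distinct arcs can meet only at $x,y,z$: each internal vertex of an $H_i$ is untouched by all identifications and lies on at most one arc. I would record this as a statement about the ``label graph'' $\mathcal{A}$ on vertex set $\{x,y,z\}$ with one edge per arc (a path through an internally-traversed $w_j$ only adds further incidences), so that the connected components of $\mathcal{A}$ coincide with those of $C$ in $\Gamma$.

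It then remains to show $\mathcal{A}$ is connected. Every arc contributes an edge between two \emph{distinct} vertices of $\{x,y,z\}$, so every vertex of $\mathcal{A}$ incident with some arc has positive degree; since there are only three possible vertices, $\mathcal{A}$ cannot split into two components, as two components would each need at least two vertices and thus at least four in all. Hence $\mathcal{A}$, and therefore $C$ in $\Gamma$, is connected. I expect the main work to be organizational rather than deep: verifying the bijection claim directly from the identification rules of the $4$-twisting (a finite check across $H_1,\dots,H_4$), and cleanly dispatching the degenerate cases, namely $C$ contained in a single $H_i$ and arcs that pass through some $w_j$ internally, the latter only adding connections among the labels and so never threatening connectivity.
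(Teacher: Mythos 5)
Your proof is correct, but it follows a genuinely different route from the paper's. The paper argues by contradiction and leans on the matroid hypothesis built into Theorem~\ref{Shih-Pivotto}(2): since $M(G)=L(\Omega)$ and the cycle $C$ is a circuit of $M(H)=M(G)$, a $C$ that is disconnected in $\Gamma$ would have to be a union of two vertex-disjoint unbalanced cycles $C_1,C_2$ of $\Gamma$; being vertex-disjoint, one of them meets $\{x,y,z\}$ in at most one vertex, hence lies inside a single $H_i$, hence is itself a cycle of $H$ properly contained in the cycle $C$ --- impossible. You never invoke $L(\Omega)$, the signature, or the hypothesis $M(G)=L(\Omega)$ at all: your arc decomposition, the observation that each arc joins two \emph{distinct} vertices of $\{x,y,z\}$ in $\Gamma$ (because distinct $w_j$'s lying in one $H_i$ are distinct distinguished vertices of that $H_i$, and these map to distinct labels), the fact that distinct arcs meet only at $\{x,y,z\}$, and the closing pigeonhole count (two loopless components would need at least four vertices among only three) are purely graph-theoretic, and each of these steps checks out. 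What each approach buys: yours is longer but strictly more general, showing that \emph{every} 4-twisting --- whether or not its lift or frame matroid is graphic --- has the stated property; the paper's is only three sentences because the circuit description of the lift matroid does the decomposition work for free, at the cost of using the hypothesis. Your handling of the degenerate cases (a cycle inside a single $H_i$, and $w_j$'s traversed internally by an arc, which can only merge components) is also sound.
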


\begin{proof}
Let $C$ be an arbitrary cycle of $H$. Assume to the contrary that
$C$ is not connected in $\Gamma$. Then $C$ is  a union of two
vertex-disjoint cycles $C_1$ and $C_2$ of $\Gamma$ as
$M(G)=L(\Omega)$. Moreover, by the definition of $\Gamma$, either
$|C_1\cap\{x,y,z\}|=1$ or $|C_2\cap\{x,y,z\}|=1$. By symmetry we may
assume that the former holds. Then $C_1$ is a cycle of $H$, a
contradiction to the fact that $C_1$ is a proper subset of the cycle
$C$ of $H$.
\end{proof}

\begin{lemma}\label{Case3-2-con}
Suppose that $\Omega$ is obtained from $H$ by a consecutive twisting
as in Theorem~\ref{Shih-Pivotto}(3). If $G$ is $2$-connected, then
every cycle in $H$ is connected in $\Gamma$ if and only if for some
$i\in[k]$ no path connects $x_i$ and $y_i$ in $H_i\del z_i$ when $k$
is even.
\end{lemma}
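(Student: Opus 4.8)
The plan is to translate the statement into a question about vertex-disjoint unbalanced cycles of $\Omega$ and then settle that question with a winding-number analysis of cycles around the cyclic arrangement of the pieces $H_1,\dots,H_k$. First I would record the reduction already implicit in the surrounding text: since $M(H)=L(\Omega)$ and the circuits of a lift matroid are the balanced cycles together with the unions of two unbalanced cycles meeting in at most one vertex, a cycle $C$ of $H$ fails to be connected in $\Gamma$ exactly when, as an edge set of $\Omega$, it is the disjoint union of two \emph{vertex-disjoint} unbalanced cycles. Hence every cycle of $H$ is connected in $\Gamma$ if and only if $\Omega$ has no two vertex-disjoint unbalanced cycles, and it suffices to prove that, for $k$ even, $\Omega$ has two vertex-disjoint unbalanced cycles if and only if each $H_i\del z_i$ contains an $x_i$-$y_i$ path.

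Next I would set up the combinatorial framework. In $\Gamma$ the piece $H_i$ is attached only at the three vertices $u_{i-1}=x_i$, $u_i=z_i$, $u_{i+1}=y_i$, and distinct pieces share only $u$-vertices; consequently any cycle of $\Gamma$ decomposes into maximal subpaths (\emph{steps}), each lying inside a single $H_i$ and joining two of its attachment vertices, so that the indices of consecutive attachment vertices along the cycle differ by at most two modulo $k$. I would then compute signs: the only negative edges of $\Omega$ are those of $H_1$ at $y_1=u_2$ and those of $H_2$ at $x_2=u_1$, and a short check of the three step-types in each $H_i$ shows that a step is negative precisely when it crosses the gap between $u_1$ and $u_2$. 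Thus the sign of a cycle is $(-1)$ raised to the number of steps crossing that gap, and a routine parity argument (analysing the steps incident to a vertex $u_j$) shows that this crossing-parity is the same for every gap between consecutive $u$-vertices. The upshot—the key sublemma I would isolate—is that a cycle of $\Gamma$ is unbalanced if and only if it crosses every gap an odd number of times; in particular an unbalanced cycle crosses each gap at least once.

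For the direction building the two cycles, assume $k$ is even and each $H_i\del z_i$ has an $x_i$-$y_i$ path $P_i$; note $P_i$ joins $u_{i-1}$ to $u_{i+1}$ and avoids $u_i$. Because $k$ is even, the steps $P_i$ with $i$ odd link the even-indexed vertices $u_k,u_2,u_4,\dots$ into one cycle $C_1$ and the $P_i$ with $i$ even link the odd-indexed vertices into a second cycle $C_2$; these are vertex-disjoint since they use disjoint index parities and each $P_i$ avoids $u_i$. Each of $C_1,C_2$ contains exactly one negative edge (at $y_1$ for $C_1$, at $x_2$ for $C_2$), so both are unbalanced, giving the required pair. (For $k$ odd the very same steps would instead chain up into a single cycle through all $u$-vertices, which is exactly why the hypothesis $k$ even is needed here.) Conversely, suppose $\Omega$ has two vertex-disjoint unbalanced cycles $C_1,C_2$ and fix $i$. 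Since they are vertex-disjoint, at least one of them, say $C$, avoids $u_i=z_i$; being unbalanced, $C$ crosses the gap between $u_{i-1}$ and $u_i$. Checking which steps can cross that gap while avoiding the vertex $u_i$, the only possibility is a step from $u_{i-1}$ to $u_{i+1}$, which must lie in $H_i$ (the unique piece containing both $u_{i-1}$ and $u_{i+1}$) and is therefore an $x_i$-$y_i$ path of $H_i$ avoiding $z_i$. Hence $H_i\del z_i$ has an $x_i$-$y_i$ path, and this direction uses no parity hypothesis.

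I expect the main obstacle to be making the winding framework fully rigorous rather than either implication once it is in place: carefully justifying the decomposition of an arbitrary cycle of $\Gamma$ into single-piece steps (including cycles that use a piece more than once), and proving that the parity of gap-crossings is independent of the chosen gap via the case analysis at a vertex $u_j$ according to whether the cycle passes through it and in which directions its two incident steps point. Once that parity invariant and the sign computation are secured, the construction and the winding argument above close both directions of the equivalence.
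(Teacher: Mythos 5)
Your overall route is genuinely different from the paper's: you reduce the lemma, via $M(H)=L(\Omega)$, to the question of whether $\Omega$ has two vertex-disjoint unbalanced cycles, and then settle that with a gap-crossing/winding-parity analysis, whereas the paper argues directly about cycles of $H$, reconstructing each one as a connected subgraph of $\Gamma$ by an explicit case analysis. Your construction for the direction ``$k$ even and all $x_i$-$y_i$ paths exist $\Rightarrow$ two disjoint unbalanced cycles'' is exactly the paper's ($C_1=P_1\cup P_3\cup\cdots$, $C_2=P_2\cup P_4\cup\cdots$), and your crossing-parity sublemma is correct. The gap is in your opening reduction: by declaring that ``it suffices to prove that, \emph{for $k$ even}, $\Omega$ has two vertex-disjoint unbalanced cycles if and only if each $H_i\del z_i$ contains an $x_i$-$y_i$ path,'' you silently drop half of what the lemma, as the paper reads and proves it, asserts. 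The paper's ``if'' direction says: if $k$ is odd \emph{or} some $H_i\del z_i$ has no $x_i$-$y_i$ path, then every cycle of $H$ is connected in $\Gamma$; its proof devotes an explicit case to $k=2n+1$. Equivalently, one must show that a consecutive twisting with $k$ odd has \emph{no} two vertex-disjoint unbalanced cycles (this is precisely the fact that makes consecutive \emph{odd}-twistings a legitimate family). Your converse direction shows only that two disjoint unbalanced cycles force all the $x_i$-$y_i$ paths to exist; it never shows they force $k$ to be even, and your parenthetical remark that for odd $k$ ``the very same steps would instead chain up into a single cycle'' only says your particular construction fails for odd $k$ --- it does not exclude other configurations of two disjoint unbalanced cycles. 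As written, the $k$-odd half of the equivalence is missing.

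The good news is that your framework closes this gap with one more observation. Suppose $C_1,C_2$ are vertex-disjoint unbalanced cycles. By your sublemma each crosses every gap, and (as you already note in your converse argument) every step crossing the gap between $u_j$ and $u_{j+1}$ has an endpoint in $\{u_j,u_{j+1}\}$. Hence for each $j$ both cycles meet $\{u_j,u_{j+1}\}$, and by disjointness one of them contains $u_j$ and the other contains $u_{j+1}$; so every $u_j$ lies on exactly one of the two cycles and consecutive $u$-vertices lie on different cycles. The map sending $j$ to the cycle containing $u_j$ is then a proper $2$-colouring of the $k$-cycle $u_1,\dots,u_k$, which forces $k$ to be even. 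Two smaller repairs are also needed: your claim that $H_i$ is ``the unique piece containing both $u_{i-1}$ and $u_{i+1}$'' is false for $k=3$ (and $k=4$), where attachment-vertex sets of distinct pieces coincide, so that step must instead appeal to the piece-aware crossing structure (a path between those vertices lying in another piece crosses different gaps, hence cannot serve as the required crossing); and the step decomposition and sign computation must accommodate steps passing through a third attachment vertex (e.g.\ an $x_i$-$z_i$ path through $y_i$), a point you flag but do not carry through. With these additions your argument becomes a complete, and arguably cleaner, alternative to the paper's proof.
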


\begin{proof}
First we prove the ``only if" part. Assume to the contrary that
$k=2n$ and for any $i\in[k]$
there is a path connecting $x_i$ and $y_i$ in $H_i\del z_i$. 
Then $G'$ has a cycle $C=P_{x_1,y_1}P_{x_2,y_2}\cdots P_{x_k,y_k}$,
where $P_{x_i,y_i}$ is a path of $H_i$ connecting $x_i$ and $y_i$
with $z_i\notin P_{x_i,y_i}$. However,
$C_1=P_{x_1,y_1}P_{x_3,y_3}\cdots P_{x_{2n-1},y_{2n-1}}$ and
$C_2=P_{x_2,y_2}P_{x_4,y_4}\cdots P_{x_{2n},y_{2n}}$ are
vertex-disjoint cycles of $\Gamma$ such that $(E(C_1), E(C_2))$ is a
partition of $E(C)$, a contradiction.

Secondly, we prove the ``if" part. Let $C$ be an arbitrary cycle of
$H$. Evidently, when $C$ is completely contained in some $H_i$, the
set $C$ is also a cycle of $\Gamma$. So we may assume that $C$
intersects at least two $H_i$'s. Assume that at most one $H_i|C$
uses $z_i$. Under this case $C$ must have the structure
$P_{x_1,y_1}P_{x_2,y_2}\cdots P_{x_k,y_k}$, where $P_{x_i,y_i}$ is a
path of $H_i$ connecting $x_i$ and $y_i$; and hence, either $k=2n+1$
or $k=2n$ and there is exactly one integer $i\in[k]$ with $z_i\in
P_{x_i,y_i}$, say $i=1$. When $k=2n+1$, we have
$$C'=P_{x_1,y_1}P_{x_3,y_3}\cdots
P_{x_{2n+1},y_{2n+1}}P_{x_2,y_2}P_{x_4,y_4}\cdots
P_{x_{2n},y_{2n}}$$ is a cycle of $\Gamma$ with $E(C)=E(C')$ if $z_1
\notin P_{x_1,y_1}$ and $C'$ is the union of two edge-disjoint
cycles of $\Gamma$ sharing vertex $z_1$ otherwise. When $k=2n$ (so
$z_1\in P_{x_1,y_1}$), we have $C_1=P_{x_1,y_1}P_{x_3,y_3}\cdots
P_{x_{2n-1},y_{2n-1}}$ and $C_2=P_{x_2,y_2}P_{x_4,y_4}\cdots
P_{x_{2n},y_{2n}}$ are two edge-disjoint cycles of $\Gamma$ with a
unique common vertex $u_1$. Hence, we can assume that there are
exactly two $H_i|C$ using $z_i$. Without loss of generality we may
assume that $C=P_{z_1,y_1}P_{x_2,y_2}\cdots P_{x_m,z_m}$, where
$P_{z_1,y_1},P_{x_i,y_i}(2\leq i\leq m-1),P_{x_m,z_m}$ are paths of
$H_1,H_i,H_m$, respectively. When $m=2s$, the set
$$C'=P_{z_1,y_1}P_{x_3,y_3}\cdots P_{x_{2s-1},y_{2s-1}}P_{z_{2s},x_{2s}}P_{y_{2s-2},x_{2s-2}}\cdots P_{y_2,x_2}$$
is a cycle of $\Gamma$ with $E(C)=E(C')$; and when $m=2s+1$, the set
$$C'=P_{z_1,y_1}P_{x_3,y_3}\cdots P_{x_{2s-1},y_{2s-1}}P_{x_{2s+1},z_{2s+1}}P_{y_{2s},x_{2s}}P_{y_{2s-2},x_{2s-2}}\cdots P_{y_2,x_2}$$
is a cycle of $\Gamma$ with $E(C)=E(C')$.
\end{proof}

\begin{lemma}\label{Case-k-even}
Suppose that $\Omega$ is obtained from $H$ by a consecutive
twisting, where $k$ is even. If $G$ is $2$-connected and every cycle
of $H$ is connected in $\Gamma$, then $\Omega$ is a pinch.
\end{lemma}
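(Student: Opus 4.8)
The plan is to exhibit a blocking vertex of $\Omega$ and then quote Lemma~\ref{a-bal-vertex-case}. Note first that $\Omega$ is unbalanced: some cycle of $\Gamma$ meets the $-1$ edges (those at $y_1$ and at $x_2$) an odd number of times. Hence, once we produce a vertex $v$ with $\Omega\del v$ balanced, Lemma~\ref{a-bal-vertex-case}, applied to the $2$-connected graph $G$ and the unbalanced signed graph $\Omega$ with $F(\Omega)=M(G)$, shows at once that $\Omega$ is a pinch. So the whole task reduces to finding a blocking vertex.

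First I would locate a separation. Since $G$ is $2$-connected, every cycle of $H$ is connected in $\Gamma$, and $k$ is even, Lemma~\ref{Case3-2-con} supplies an index $i\in[k]$ for which no path connects $x_i$ and $y_i$ in $H_i\del z_i$; that is, $z_i$ separates $x_i$ from $y_i$ inside $H_i$. I claim that $u_i$ is a blocking vertex, i.e.\ that $\Omega\del u_i$ is balanced.

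To see this, recall that in $\Gamma$ each $H_j$ meets the rest of the graph only at $u_{j-1}=x_j$, $u_j=z_j$ and $u_{j+1}=y_j$, so the $H_j$ form a cyclic \emph{necklace} in which consecutive blobs $H_j,H_{j+1}$ share exactly the pair $\{u_j,u_{j+1}\}$. Deleting $u_i=z_i$ and using the separation, the component $A$ of $H_i\del z_i$ containing $x_i$ now meets $\Gamma\del u_i$ only at $u_{i-1}$, the component $B$ containing $y_i$ meets it only at $u_{i+1}$, and $A$ and $B$ are no longer joined through $H_i$ (any further components of $H_i\del z_i$ become detached and may be ignored). Thus $\Gamma\del u_i$ is an \emph{open} path of blobs $A,H_{i-1},H_{i-2},\dots,H_{i+1},B$ (indices decreasing cyclically and omitting $H_i$), rather than a closed necklace. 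Since $A$ and $B$ are attached at the single cut-vertices $u_{i-1}$ and $u_{i+1}$, every cycle meeting $A$ or $B$ lies inside it, and these are balanced as subgraphs of the balanced signed graph $H_i$. Each remaining $H_j$ is itself balanced (a cycle inside $H_j$ meets the $-1$ edges an even number of times), and for each consecutive pair $H_j,H_{j+1}$ the linking cycle formed by a path in $H_j$ from $u_j$ to $u_{j+1}$ together with a path in $H_{j+1}$ from $u_{j+1}$ to $u_j$ also meets the $-1$ edges an even number of times (the crucial pair being $\{H_1,H_2\}$, where the two halves each carry one $-1$ edge). As the blob-adjacency along this open path is a tree, I would then switch the blobs to be all $+1$ one after another with no monodromy obstruction, concluding that every cycle of $\Gamma\del u_i$ is balanced; hence $u_i$ is a blocking vertex and the reduction is complete.

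The step I expect to be most delicate is this last balance check, and it is exactly here that the separation hypothesis is essential: without it, a path from $u_{i-1}$ to $u_{i+1}$ through $H_i\del z_i$ would reclose the necklace into a cycle winding once around it, a cycle of odd $-1$-parity that would destroy balance. Turning the informal assertion ``an open path of balanced blobs, with balanced linking cycles, is balanced'' into a rigorous statement---either by the sequential switching just described or by an ear/cycle-space decomposition---is the main obstacle; the remainder is routine bookkeeping with the identifications that define $\Gamma$.
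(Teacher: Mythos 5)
Your proof takes exactly the paper's route: invoke Lemma~\ref{Case3-2-con} to obtain an index $i$ for which $z_i$ separates $x_i$ from $y_i$ in $H_i$, argue that $z_i$ (the vertex $u_i$ of $\Gamma$) is consequently a blocking vertex, and conclude via Lemma~\ref{a-bal-vertex-case}. The paper states the blocking-vertex step in a single sentence, so your necklace/switching analysis is simply a correct expansion of that assertion (and your explicit coloring-by-blobs argument does go through), not a different approach.
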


\begin{proof}
By Lemma~\ref{Case3-2-con}, for some $i\in[k]$ no path connects
$x_i$ and $y_i$ in $H_i\del z_i$. Then every unbalanced cycle in
$\Omega$ uses $z_i$, i.e. $z_i$ is a blocking vertex of $\Omega$.
Lemma~\ref{a-bal-vertex-case} implies that  there is a graph $H'$
2-isomorphic to $G$ such that $\Omega$ is obtained from $H'$ by a
pinch.
\end{proof}

Therefore, by Lemmas~\ref{Case2}, \ref{Case3-2-con}
and~\ref{Case-k-even}, and the analysis in the paragraph following
Theorem~\ref{Shih-Pivotto}, those signed graphs $\Omega$ having
$F(\Omega)$ isomorphic to $M(G)$ with form
Theorem~\ref{Zaslavsky}(4) (that is, having just one unbalanced
block and without two vertex-disjoint unbalanced cycles) are
obtained by a pinch, a 4-twisting, or a consecutive odd-twisting.

\section*{Acknowledgements}
We thank Geoff Whittle for helpful discussions. And we also thanks
the referees for carefully reading the paper and suggesting many
improvements.

\end{document}